\newtheorem{theorem}{Theorem}[section]
\newtheorem{lemma}[theorem]{Lemma}
\newtheorem{proposition}[theorem]{Proposition}
\newtheorem{corollary}[theorem]{Corollary}
\theoremstyle{definition}
\newtheorem{definition}[theorem]{Definition}
\theoremstyle{remark}
\newtheorem{remark}[theorem]{Remark}
\newtheorem*{theorem*}{Theorem}
\numberwithin{equation}{section}
\begin{document}

\title{Notes on equivariant higher Chow groups}

\author{Nguyen Manh Toan}
\curraddr{Institut für Mathematik, Universität Osnabrück, Albrechtstr. 28a, 
49076, Osnabrück.}
\email{toan.nguyen@uni-osnabrueck.de}

\subjclass[2010]{Primary 19E15, 14C15; Secondary 14C40}


\keywords{Equivariant motivic cohomology, algebraic $K$-theory, equivariant Riemann-Roch}

\begin{abstract}
In this short note, we prove a comparision theorem between Levine-Serp\'e's equivariant higher Chow groups of an algebraic variety equipped with an action of a finite group and ordinary higher Chow groups of its fixed points.
As a consequence, we show that the equivariant motivic spectral sequence degenerates rationally. This yields a Riemann-Roch Theorem for equivariant algebraic $K$-theory.
\end{abstract}

\maketitle\

\section{Introduction}
Let $k$ be a field and $G$ a finite group of order coprime to the exponential characteristic of $k$. Levine-Serp\'e have defined in \cite{LS1} equivariant higher Chow groups $CH_{p}(G,X,r)$ 
for any separated noetherian scheme $X$ which is essentially of finite type over $k$ equipped with an action of $G$ (see Definition \ref{higherChowgroup} below). 
They are a generalization of Bloch’s higher Chow groups in the equivariant setting. 
Equivariant higher Chow groups can be considered as a motivic Borel-Moore homology theory on the category of $G$-schemes over $k$. 
Moreover, these groups have a close relation with the higher $K$-theory of $G$-equivariant coherent sheaves on $X$ (cf. \cite{RT1}) by a spectral sequence
\begin{equation} \label{LevineSerpe}
E^{p,q}_1 = CH_{-p}(G,X,-p-q) \Rightarrow G_{-p-q}(G,X)
\end{equation}
(\cite[Corollary 3.8]{LS1}). In the case of the trivial group, this reduces to the motivic spectral sequence constructed by Bloch-Lichtenbaum \cite{BL1}, Friedlander-Suslin \cite{FS1} and Levine \cite{ML3}.

The groups $CH_p(G,X,r)$ are interesting objects which contain information about $X$ as well as the action of $G$ on $X$. They are, of course, very hard to compute in general.

In this note, we establish the following reconstruction theorem for equivariant higher Chow groups.
\begin{theorem*}[Theorem \ref{MainTheorem}]
Let $n$ be the order of $G$ and assume that $k$ contains $n$-th roots of unity. For any $G$-scheme $X$ over $k$, there is a natural isomorphim of $\mathbb{Z}[1/n]$-modules
\begin{equation} \label{1stIso}
\pi(X) : \prod_{\sigma \in \Gamma} (CH_p (X^{\sigma}, r) \otimes \tilde{R}\sigma)^{N_G(\sigma)} \xrightarrow{\sim} CH_p(G, X, r) \otimes \mathbb{Z}[1/n]
\end{equation}
where $\Gamma$ is a set of representatives for the conjugacy classes of cyclic subgroups of $G$,
$X^{\sigma}$ is the \textit{fixed point subscheme} of $X$ under the action of $\sigma$, 
$\tilde{R}\sigma$ is the $m$-th cyclotomic subring of the representation ring $R\sigma$ tensored with $\mathbb{Z}[1/n]$ where $m$ is the order of $\sigma$
and $N_G(\sigma)$ is the \textit{normalizer} of $\sigma$ in $G$.
\end{theorem*}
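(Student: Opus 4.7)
The plan is to adapt the classical strategy used by Vistoli for equivariant Chow groups and by Thomason for equivariant $K$-theory: establish a Mackey/Green-functor formalism on subgroups of $G$, reduce to cyclic subgroups via Dress induction after inverting $n$, and then prove a concentration theorem in the cyclic case to identify the factors on the left-hand side of \eqref{1stIso}.

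First I would show that, with $X$ fixed, the assignment $H \mapsto CH_p(H, X, r) \otimes \mathbb{Z}[1/n]$ (for $H$ running over subgroups of $G$, viewing $X$ as an $H$-scheme by restriction) defines a Mackey functor, with restrictions along inclusions and transfers given by the functoriality of Levine--Serp\'e's cycle complexes. The representation-ring Green functor $H \mapsto R(H) \otimes \mathbb{Z}[1/n]$ acts on it through equivariant characteristic classes. Under the assumption that $k$ contains $n$-th roots of unity, Dress's induction theorem applied to this Green module produces a canonical decomposition
\begin{equation*}
CH_p(G, X, r) \otimes \mathbb{Z}[1/n] \;\cong\; \prod_{\sigma \in \Gamma} \bigl( CH_p(\sigma, X, r)_{\mathrm{prim}} \otimes \mathbb{Z}[1/n] \bigr)^{N_G(\sigma)},
\end{equation*}
where the primitive part is the cokernel of induction from the proper subgroups of the cyclic group $\sigma$.

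The second step is a concentration theorem in the cyclic case $G = \sigma$. I would apply the Levine--Serp\'e localization long exact sequence to the closed immersion $X^{\sigma} \hookrightarrow X$ with open complement $U$. Since $\sigma$ acts trivially on $X^{\sigma}$, the group $CH_p(\sigma, X^{\sigma}, r) \otimes \mathbb{Z}[1/n]$ decomposes as $CH_p(X^{\sigma}, r) \otimes R\sigma \otimes \mathbb{Z}[1/n]$, and extracting the primitive component isolates exactly the factor $CH_p(X^{\sigma}, r) \otimes \tilde{R}\sigma$. For the complement, I would stratify $U$ by isotropy type of proper subgroups of $\sigma$ and argue inductively that the primitive part of $CH_p(\sigma, U, r) \otimes \mathbb{Z}[1/n]$ vanishes, since on each stratum the action factors through a proper quotient of $\sigma$ so that the corresponding $R\sigma$-module structure annihilates the primitive summand.

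The main obstacle is this concentration claim on $U$. In the $K$-theory setting it is Thomason's theorem, whose proof uses deformation to the normal cone together with the self-intersection formula for the Koszul resolution; transporting the argument to higher Chow groups requires the analogous self-intersection computation on Levine--Serp\'e's simplicial cycle complex, together with a Thom isomorphism controlling the $R\sigma$-action on cycles supported on the normal bundle of each stratum. Once the concentration result is in place, combining it with the Mackey decomposition above yields the isomorphism $\pi(X)$, and naturality in $X$ follows from the functoriality of every ingredient used.
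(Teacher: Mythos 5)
Your overall strategy (reduce to cyclic subgroups, then concentrate on the fixed locus) is in the right spirit, but it diverges substantially from the paper's argument and, as written, has two genuine gaps. The paper does not invoke Dress induction at all: it constructs $\pi(X)$ explicitly (cyclotomic inclusion, the trivial-action isomorphism, push-forward along $X^{\sigma}\hookrightarrow X$, then the induction maps), observes via Lemma \ref{Commutativity} that $\pi(-)$ is compatible with the localization sequences on both sides, and thereby reduces to $X$ a disjoint union of points with transitive $G$-action. There the statement collapses to a $K_0$-level assertion about representation rings of stabilizers and descent for the inertia subgroup of a field extension (Proposition \ref{FirstIdentity}), because the cycle complex is by definition a direct sum of groups $K_0(G_x,\mathrm{Spec}(k(x)))$. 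Your first gap is that the Mackey/Green-module formalism you want to feed into Dress induction is asserted rather than established: you need a projection formula $e\cdot\mathrm{Ind}^{\sigma}_{\tau}(x)=\mathrm{Ind}^{\sigma}_{\tau}(\mathrm{Res}(e)\cdot x)$ for the Levine--Serp\'e induction maps and an $R(-)$-module structure compatible with them, none of which is in the literature (the paper only proves compatibility of $\mathrm{Ind}^G_H$ with proper push-forward and flat pull-back in the $X$-variable). This is probably fillable, but it is real work you have skipped over.

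The second gap is more serious and concerns your concentration step on $U=X\setminus X^{\sigma}$. First, stratifying $U$ ``by isotropy type of proper subgroups'' misses the points of $U$ that are set-theoretically fixed by all of $\sigma$ but on whose residue fields $\sigma$ acts nontrivially; these lie outside the scheme-theoretic fixed locus $X^{\sigma}$ yet have full set-theoretic stabilizer, so your inductive argument does not apply to them. Handling them requires the inertia-group/descent analysis that occupies the second half of Proposition \ref{FirstIdentity}. Second, the machinery you propose to import --- deformation to the normal cone, the self-intersection formula for the Koszul resolution, a Thom isomorphism --- requires $X^{\sigma}\hookrightarrow X$ to be a regular embedding, which fails for general (singular) $X\in\mathbf{Sch}^G_k$; Theorem \ref{MainTheorem} makes no smoothness hypothesis. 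In fact no such geometry is needed: because $z_p(\sigma,U,r)$ is by construction a direct sum over orbits of $K_0(\sigma_x,\mathrm{Spec}(k(x)))$, the cyclotomic idempotent kills each summand whose inertia subgroup is proper in $\sigma$ termwise at the level of the cycle complex (since the restriction of that idempotent to any proper subgroup vanishes). You have both misidentified where the difficulty lies and reached for a tool that is unavailable in this generality.
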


The left-hand side of \eqref{1stIso} can be easily expressed in the form of 
$$
(\prod_{g \in G} CH_p (X^{g}, r) \otimes \mathbb{Z}[1/n])^G.
$$
For $X$ smooth, $r=0$ with rational coefficients, $(\prod_{g \in G} CH_*(X^g,0) \otimes \mathbb{Q})^G$ is exactly the 
(\textit{small}) \textit{orbifold Chow ring} $CH_{*}^{orb}([X/G])$ of the global quotient $[X/G]$ studied by Abramovich–Graber–Vistoli \cite{AGV1} and Jarvis-Kaufmann-Kimura \cite{JKK1} which is an algebraic version of Chen–Ruan cohomology.

Theorem \ref{MainTheorem} is an analog of Vistoli’s theorem for equivariant algebraic $K$-theory \cite[Theorem 1 and 2]{AV1} and Segal’s theorem for equivariant topological $K$-theory \cite{HH1}. 
Using this result we show that the spectral sequence \eqref{LevineSerpe} degenerates with rational coefficients for $X$ smooth which yields
a Riemann-Roch theorem for equivariant algebraic $K$-theory.
\begin{theorem*}[Theorem \ref{BigConsequence}]
If $X$ is a smooth $G$-scheme over $k$, there is a natural isomorphism of $\mathbb{Q}$-vector spaces
\begin{equation}
\bigoplus_{p,q} CH_{p}(G,X,q) \otimes \mathbb{Q} \cong \bigoplus_{q}K_{q}(G,X) \otimes \mathbb{Q}.
\end{equation}
\end{theorem*}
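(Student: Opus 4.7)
The plan is to combine Theorem \ref{MainTheorem} with Vistoli's analogous decomposition of equivariant algebraic $K$-theory, reducing the statement to the classical non-equivariant rational Riemann-Roch on each smooth fixed-point subscheme $X^\sigma$. As noted in the introduction, the theorem will follow from rational degeneration of the equivariant motivic spectral sequence \eqref{LevineSerpe}.

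First I would verify that, for every cyclic subgroup $\sigma \subset G$, the fixed-point subscheme $X^\sigma$ is smooth over $k$. Since $|\sigma|$ divides $|G|$ and is therefore coprime to the exponential characteristic of $k$, the action of $\sigma$ on $X$ is tame and linearly reductive, so $X^\sigma$ is smooth by a standard criterion. On each such $X^\sigma$, the non-equivariant motivic spectral sequence of Bloch-Lichtenbaum, Friedlander-Suslin and Levine degenerates rationally via the eigenspace decomposition for Adams operations on $K$-theory and motivic cohomology, yielding a natural, $N_G(\sigma)$-equivariant isomorphism
$$
\bigoplus_{p,q} CH_p(X^\sigma, q) \otimes \mathbb{Q} \;\cong\; \bigoplus_q K_q(X^\sigma) \otimes \mathbb{Q}.
$$

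Next I would tensor both sides with the finite-dimensional $\mathbb{Q}$-algebra $\tilde{R}\sigma \otimes \mathbb{Q}$ (preserving the isomorphism by flatness), take $N_G(\sigma)$-invariants (exact in characteristic zero over $\mathbb{Q}$), and form the finite product over $\sigma \in \Gamma$. Theorem \ref{MainTheorem} identifies the resulting left-hand product with $\bigoplus_{p,q} CH_p(G, X, q) \otimes \mathbb{Q}$, and Vistoli's theorem \cite{AV1} identifies the right-hand product with $\bigoplus_q K_q(G, X) \otimes \mathbb{Q}$, producing the desired isomorphism.

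The main technical obstacle, in my view, is to show that the isomorphism produced above really coincides with the edge map obtained from rational degeneration of the spectral sequence \eqref{LevineSerpe}, rather than being constructed abstractly. To handle this I would need to check that the isomorphism of Theorem \ref{MainTheorem} is compatible with the tower of $G$-equivariant coherent sheaves underlying the Levine-Serp\'e construction, so that the equivariant spectral sequence splits rationally into the product, indexed by $\sigma \in \Gamma$, of copies of the non-equivariant spectral sequence on $X^\sigma$ tensored with $\tilde{R}\sigma$. Once this compatibility is established, rational degeneration on each smooth $X^\sigma$ propagates to the equivariant spectral sequence, and the theorem follows at the level of associated graded pieces.
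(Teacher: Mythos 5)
Your proposal follows essentially the same route as the paper: smoothness of each $X^\sigma$, rational degeneration of the non-equivariant motivic spectral sequence on $X^\sigma$, tensoring with $\tilde{R}\sigma$, taking $N_G(\sigma)$-invariants and products over $\Gamma$, and then matching the two sides via Theorem \ref{MainTheorem} and Vistoli's theorem (plus the identification $K_q(G,X)\cong G_q(G,X)$ for smooth $X$, which you leave implicit). The compatibility with the Levine--Serp\'e tower that you flag as the main technical obstacle is exactly what the paper establishes in Section 4 by lifting the comparison map to the level of spectra, so your outline is correct and matches the paper's argument.
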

Moreover, Theorem \ref{MainTheorem} provides two multiplicative structures on rational equivariant higher Chow groups. Hence we obtain complete answers to questions proposed in \cite[Introduction]{LS1}. 

This paper is organized as follows. In Section 2, we will briefly review Levine-Serp\'e's construction of equivariant higher Chow groups and discuss various functorial properties. 
Most of the things can be found in \cite{LS1} except the construction and functorialities of the \textit{induction map} that is new. 
In Section 3, we establish the comparision morphism from higher Chow groups of fixed point subschemes to equivariant higher Chow groups. This is modelled by Vistoli's construction for the $K$ groups \cite[Section 3]{AV1}.
Using localization property for (equivariant) higher Chow groups, we will show that the comparision morphism is an isomorphism.
In Section 4, we will lift Vistoli's construction to the $K$-theory spectra. This allows us to compare the equivariant motivic spectral sequence with the ordinary one. 
From there, some applications will be derived, including a Riemann-Roch theorem for equivariant algebraic $K$-theory and multiplicative structures on equivariant higher Chow groups with rational coefficients.
We will also see how to understand Levine-Serp\'e's equivariant higher Chow groups from orbifold theories.

This paper was written during my visit at the Mittag-Leffler Institute inside the Research program on Algebro-Geometric and Homotopical Methods.
I would like to thank the Institute and organizers, especially Paul Arne Østvær, for providing a stimulating, supportive environment and financial support. 
Discussions with Alexey Ananyevskiy, Federico Binda, Marc Levine and Mathias Wendt were very helpful. A brief visit to the Hausdorff Research Institute for Mathematics enabled me to profit from discussions with Lie Fu.
\subsection*{Notation and conventions}

Throughout this paper, $k$ is a fixed base field and $G$ is a finite group whose order is coprime to the exponential characteristic of $k$.

We write $\mathbf{Sch}^G_k$ for the category whose objects are seperated schemes essentially of finite type and quasi-projective over $k$ equipped with a left $G$-action and morphisms are $G$-equivariant morphisms.
Set $\Lambda: = \mathbb{Z}[1 /|G|]$ for our coefficient ring.

For any $X \in \mathbf{Sch}^G_k$, the \textit{abelian category} of $G$-equivariant coherent sheaves on $X$ is denoted by $\mathbf{Coh}_G(X)$ and 
the \textit{exact category} of $G$-equivariant vector bundles on $X$ is denoted by $\mathbf{Vect}_G(X)$. 
In the case of the trivial group $G$, we denote $\mathbf{Coh}(X)$ for $\mathbf{Coh}_G(X)$.

The connected $K$-theory spectra of $\mathbf{Coh}_G(X)$ and $\mathbf{Vect}_G(X)$ are denoted by $G(G,X)$ and $K(G,X)$, respectively.  
We will use the notation $G_i(G,X)$ to denote the $i$-th homotopy group of $G(G,X)$ and similarly for $K(G,X)$. 
When $X$ is smooth, the natural inclusion $\mathbf{Vect}_G(X) \hookrightarrow \mathbf{Coh}_G(X)$ induces an isomorphim $K_i(G,X) \xrightarrow{\sim} G_i(G,X)$. This identification will be used frequently in this paper. 

For any group $H$, the ring of representations of $H$ over $k$ tensored with $\Lambda$ is denoted by $RH$, i.e., $RH: = K_0(H, \mathrm{Spec}(k)) \otimes \Lambda$. 
\section{Equivariant higher Chow groups}

We recall Levine-Serp\'e's definition of equivariant higher Chow groups \cite{LS1} and discuss some of their properties which will be used in the following sections.
\subsection{Construction}

Let $\Delta^{\bullet}$ be the standard cosimplicial scheme with
$$
\Delta^{r}: = {\rm Spec} \big( k[t_0, \ldots , t_r] / (\sum t_i = 1) \big)
$$
equipped with the trivial action of $G$. A $face$ of $\Delta^r$ is a closed subscheme defined by $t_{i_1} = \ldots = t_{i_j} = 0$. 

For any $X \in \mathbf{Sch}^G_k$, we set 
$$
S_{(p)}^{G,X}(r): = \left\{ W \subset X \times \Delta^r \middle|
\begin{array}{c l}  W \mbox{ is a closed } G-\mbox{stable subset} \\
\mbox{and }\mbox{dim}_{X\times F} (W \cap X\times F) \le p + \mbox{dim} F \\ 
\mbox{for all faces }F \subset \Delta^r 
\end{array} \right\}.
$$

The group $G$ acts obviously on the set $(X \times \Delta^r)_{(p+r)}$ of dimension $(p+r)$ points on $X \times \Delta^{r}$. Let
$$
X^{G}_{(p)}(r): = \{[x] \in (X \times \Delta^r)_{(p+r)} /G \ | \ \overline{G . x} \in S^{G,X}_{(p)}(r)\}
$$
where $\overline{G .x}$ stands for the closure of the orbit $G.x$ in $X \times \Delta^r$.

We define 
$$
z_p (G,X,r): = \bigoplus_{[x] \in X^G_{(p)}(r)} K_0(G_x, \mathrm{Spec} (k(x))),
$$
where $k(x)$ is the residue field of $x$ and $G_{x}$ is the \textit{set-theoretic stabilizer group} of $x$ in $G$, i.e.,
$
G_x:= \{g \in G | \ gx = x \}.
$ 
The assigment $r \to z_p(G,X,r)$ forms a simplicial abelian group (\cite[Proposition 3.2]{LS1}) which is denoted by $z_p(G,X,\bullet)$. 

\begin{definition} \cite[Definition 3.4]{LS1} \label{higherChowgroup}
The \textit{equivariant cycle complex (of Bredon type)} $z_p(G,X,*)$ is the complex associated to $z_p(G,X,\bullet)$.
The \textit{equivariant higher Chow groups (of Bredon type)} are defined by
$$
CH_{p}(G,X,r): = H_r( z_p(G,X,*)).
$$
\end{definition}

In the case of the trivial group, we recover the set $X_{(p)}(r)$ of dimension $(p+r)$ subvarieties of $X \times \Delta^{r}$ meeting all faces properly, the cycle complex $z_{p}(X,*)$ and the higher Chow group $CH_{p}(X,r)$ defined in \cite{Bloch1} (or \cite{ML2}). 
Note that $CH_p(X,0)$ is exactly the usual Chow group $CH_{p}(X)$ of dimension $p$ cycles on $X$.

When $G$ acts $trivially$ on $X$, there is a natural isomorphism 
$$
K_0(X) \otimes K_0(G, \mathrm{Spec}(k)) \cong K_0(G, X)
$$ (cf. \cite[Proposition 1.6]{AV1}) which yields an isomorphism
$$
z_p(X, r) \otimes RG \cong z_p(G, X, r) \otimes \Lambda.
$$
Therefore,
\begin{equation} \label{TrivialAction}
CH_p(X, r) \otimes RG \cong CH_p(G, X, r) \otimes \Lambda.
\end{equation}

Since $X$ is quasi-projective and $G$ is finite, the quotient $X/G$ exists as a scheme. If $G$ acts $freely$ on $X$, then 
$
X^G_{(p)}(r) = (X/G)_{(p)}(r)
$ and $K_0(G_x, \mathrm{Spec} \ k(x)) =  K_0(\mathrm{Spec} \ k(x)) \cong \mathbb{Z}$. This gives a natural isomorphism
$$
z_p(G,X,r) \cong z_p(X/G,r),
$$
and hence
$$
CH_p(G,X,r) \cong CH_p(X/G,r).
$$

\subsection{Functoriality with respect to X}

If $f: Y \to X$ is a $proper \ G$-equivariant morphism in $\mathbf{Sch}^G_k$, there is the \textit{push-forward homomorphism} 
\begin{equation*}
f_{*}(r): z_p(G,Y,r) \to z_p(G,X,r)
\end{equation*} defined by
$$
[\alpha \in K_0 (G_z, \mathrm{Spec}(k(Z)))] \mapsto [( f \times \mathrm{id})_{*} (\alpha) \in K_0 (G_{( f \times \mathrm{id})(z)} , \mathrm{Spec}(k(( f \times \mathrm{id})(Z ))))]
$$
if $Z \mapsto (f \times \mathrm{id})(Z)$ is generically finite and sending $\alpha$ to zero if not. These maps form a simplical map
$$
f_{*}(-): z_p(G,Y,-) \to z_p(G,X,-)
$$
which yields the \textit{push-forward map} for equivariant higher Chow groups
\begin{equation} \label{PushForward}
f_{*}: CH_p(G,Y,r) \to CH_{p}(G,X,r).
\end{equation}

If $f:Y \to X$ is a $flat \ G$-equivariant morphism of relative dimension $d$, the \textit{pull-back homomorphism}
\begin{equation*}
f^{*}(r): z_p(G,X,r) \to z_{p+d}(G,Y,r)
\end{equation*}
is given by
$$
[\alpha \in K_0 (G_z, \mathrm{Spec}(k(Z)))] \to [( f \times \mathrm{id})^{*} (\alpha) \in 
\bigoplus_{[y] \in Y^G_{(p+d)}(r), (f\times \mathrm{id})(y) \in G.x} K_0 (G_y , \mathrm{Spec}(k(y))].
$$
These maps form a simplical map 
$$
f^{*}(-): z_p(G,X,-) \to z_{p+d}(G,Y,-)
$$
which yields the \textit{pull-back map} for equivariant higher Chow groups
\begin{equation} \label{PullBack}
f^{*}: CH_p(G,X,r) \to CH_{p}(G,Y,r).
\end{equation}

For proper composable morphisms $f$ and $g$, we have $(f \circ g)_* = f_* \circ g_*$. If $f$ and $g$ are flat composable morphisms then $(f \circ g)^* = g^* \circ f^*$. Moreover, if
\[
\begin{tikzcd}
W \arrow{r}{f'} \arrow{d}{g'}
&Z \arrow{d}{g}\\
Y \arrow{r}{f} &X
\end{tikzcd}
\]
is a $G$-equivariant cartesian squares in which $f$ is proper and $g$ is flat then $g^* \circ f_* = f'^* \circ g'_*$. These properties are well-known for (equivariant) algebraic $K$-theory and higher Chow groups.
See \cite[3.2]{LS1} for more details.

\subsection{Functoriality with respect to G}

If $\phi: H\to G$ is a group homomorphism and $X \in \mathbf{Sch}^G_k$ is a $G$-scheme then $\phi$ gives an action of $H$ on $X$. The collection of maps
$$
\phi_x^{*}: K_0(G_x, \mathrm{Spec} k(x)) \to K_0(H_x, \mathrm{Spec})
$$
induces the map of simplicial abelian groups  
$$
\phi^{*}: z_p(G, X, r) \to z_p(H,X,r)
$$
which yields the \textit{restriction map}
$$
\phi^{*}: CH_p(G, X, r) \to CH_p(H,X,r).
$$
It is easy to see that the maps $\phi^*$ are natural with respect to the proper push-forward and flat pull-back (cf. \cite[Section 3.2]{LS1}). 

We now want to construct a group homomorphism going in the other direction when $H \subset G$ be a subgroup. 

Recall that the \textit{induction morphism} for equivariant algebraic $K$-theory
$$
\mathrm{Ind}^G_H: K_0(H,X) \to K_0(G,X)
$$
is the push-forward
$$
\pi_{*}: K_0(G, (X\times G) / H) \to K_0(G,X)
$$
along the ($G$-equivariant) projection $\pi: (X \times G) / H \to X$ together with a natural identification 
$$
K_0(G, (X \times G) / H) \cong K_0(H,X)
$$ where the (free) action of $H$ on $X \times G$ is given by
$$
h(x,g) = (hx, gh^{-1}).
$$
For more details, see \cite[Section 2]{AV1}.

The analogous operations are valid for equivariant cycle complex. If $[x] \in (X \times \Delta^{r})_{(p+r)}/H$ such that $\overline{H.x} \in S^{H,X}_{(p)}(r)$ then $\overline{G.x} \in S^{G,X}_{(p)}(r)$. 
It is obvious that each orbit $H.x$ gives rise to an orbit $G.x$ and the stabilizer group $H_x$ is a subgroup of $G_x$. Therefore, we have the induction map for equivariant $K$-theory:
$$
\mathrm{Ind}^{G_x}_{H_x}(\mathrm{Spec}(k(x))): K_0(H_x, \mathrm{Spec} (k(x))) \to K_0(G_x, \mathrm{Spec} (k(x))).
$$
Take the sum over all orbits on both sides to obtain a map on the elements of the equivariant cycle complexes
$$
\mathrm{Ind}^G_H(X,r) : z_p(H,X,r) \to z_p(G,X,r).
$$ 
The maps $\mathrm{Ind}^G_H(X,r)$ are 'push-forward' maps, so they form a simplical map 
$$
\mathrm{Ind}^G_H(X,-) : z_p(H,X,-) \to z_p(G,X,-)
$$
which yields the \textit{induction map} for equivariant higher Chow groups
\begin{equation} \label{InducedMap}
\mathrm{Ind}^G_H(X) : CH_p (H, X, r) \to CH_p (G, X, r).
\end{equation}

\begin{lemma} \label{Commutativity}
$\mathrm{(1)}$ If $f: Y \to X$ is a flat $G$-morphism of $G$-schemes, then for any subgroup $H \subset G$ the diagram
\[
\begin{tikzcd}
CH_{p}(H,X,r) \arrow{r}{\mathrm{Ind}^G_H(X)} \arrow{d}{f^{*}}
&CH_{p}(G,X,r) \arrow{d}{f^{*}}\\
CH_{p}(H,Y,r) \arrow{r}{\mathrm{Ind}^G_H(Y)} & CH_{p}(G,Y,r)
\end{tikzcd}
\]
commutes.

$\mathrm{(2)}$ If $g: Y \to X$ is a proper $G$-morphism of $G$-schemes, then for any subgroup $H \subset G$ the diagram
\[
\begin{tikzcd}
CH_{p}(H,Y,r) \arrow{r}{\mathrm{Ind}^G_H(Y)} \arrow{d}{g_{*}}
&CH_{p}(G,Y,r) \arrow{d}{g_{*}}\\
CH_{p}(H,X,r) \arrow{r}{\mathrm{Ind}^G_H(X)} & CH_{p}(G,X,r)
\end{tikzcd}
\]
commutes.
\end{lemma}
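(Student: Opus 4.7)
The strategy is to realize the induction map at the cycle-complex level as a proper push-forward along a projection, thereby reducing both commutativity assertions to the base-change and functoriality properties of push-forward for equivariant higher Chow groups recalled in Section 2.2.

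Set $W_X := (X \times G)/H$ with $H$ acting by $h(x,g) = (hx, gh^{-1})$, and let $\pi_X : W_X \to X$, $[(x,g)] \mapsto gx$, be the $G$-equivariant projection used by Vistoli. Since $|H|$ is invertible in $k$ and $H$ acts freely on $X \times G$, the map $\pi_X$ is finite étale of degree $|G/H|$, hence both flat and proper. The orbit correspondence $G \cdot [(\tilde x, e)] \leftrightarrow H \cdot \tilde x$ (with matching stabilizers $G_{[(\tilde x,e)]} = H_{\tilde x}$) upgrades Vistoli's identification $K_0(G, W_X) \cong K_0(H, X)$ to an isomorphism of cycle complexes $z_p(G, W_X, r) \cong z_p(H, X, r)$. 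Under this identification, the orbit-wise induction map $\mathrm{Ind}^G_H(X, r)$ becomes precisely the proper push-forward $(\pi_X)_*$: each $G$-orbit $G \cdot [(\tilde x, e)]$ is sent to the $G$-orbit $G \cdot \tilde x$ in $X$ with stabilizer $G_{\tilde x} \supset H_{\tilde x}$, and the induced $K_0$-push-forward on the residue-field part is exactly $\mathrm{Ind}^{G_{\tilde x}}_{H_{\tilde x}}$.

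Granted this reformulation, form the Cartesian square of $G$-equivariant morphisms
\[
\begin{tikzcd}
W_Y \arrow{r}{\tilde f} \arrow{d}{\pi_Y}
&W_X \arrow{d}{\pi_X}\\
Y \arrow{r}{f} &X
\end{tikzcd}
\]
where $\tilde f$ is induced by $f \times \mathrm{id}_G$. For part (1), with $f$ flat, $\tilde f$ is flat as the base change of $f$ along the étale map $\pi_X$, and the base-change identity $f^* \circ (\pi_X)_* = (\pi_Y)_* \circ \tilde f^*$ recalled in Section 2.2 translates, under the cycle-complex identifications for $X$ and $Y$, into the desired equality $f^* \circ \mathrm{Ind}^G_H(X) = \mathrm{Ind}^G_H(Y) \circ f^*$. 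For part (2), form the analogous square with the proper morphism $g$ in place of $f$; then $\tilde g$ is proper as well, and functoriality of push-forward applied to $\pi_X \circ \tilde g = g \circ \pi_Y$ yields $g_* \circ (\pi_Y)_* = (\pi_X)_* \circ \tilde g_*$, which is the sought commutativity.

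The main obstacle is verifying that the cycle-complex identification $z_p(G, W_X, r) \cong z_p(H, X, r)$ is truly natural with respect to flat pull-back and proper push-forward (matching $\tilde f^* \leftrightarrow f^*$ and $\tilde g_* \leftrightarrow g_*$), and that $(\pi_X)_*$ genuinely corresponds to the orbit-wise induction. These are cycle-level refinements of Vistoli's $K_0$-arguments in \cite[Section 2]{AV1}; the required bookkeeping of orbits, stabilizers, and equivariant $K_0$-push-forwards is controlled throughout by the freeness of the $H$-action on the $G$-factor of $W_X$, so no new geometric input is needed beyond the étaleness of $\pi_X$.
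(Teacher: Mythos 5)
Your proposal is correct and follows essentially the same route as the paper: the paper's (very terse) proof likewise identifies $\mathrm{Ind}^G_H(X)$ with the push-forward $\pi_*\colon CH_p(G,(X\times G)/H,r)\to CH_p(G,X,r)$ via the identification $CH_p(G,(X\times G)/H,r)\cong CH_p(H,X,r)$, and then invokes the base-change and functoriality properties of Section 2.2. Your write-up simply makes explicit the orbit/stabilizer bookkeeping and the Cartesian square that the paper leaves implicit.
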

\begin{proof}
The proof is straightforward, using the fact that the induction morphism
$$
\mathrm{Ind}^G_H(X) : CH_p (H, X, r) \to CH_p (G, X, r).
$$
is the push-forward
$$
\pi_{*}: CH_p (G, (X \times G)/H, r) \to CH_p (G, X, r).
$$
along the ($G$-equivariant) projection $\pi: (X \times G) / H \to X$, together with an identification 
$$
CH_p(G, (X \times G) / H, r) \cong CH_p(H,X, r).
$$
\end{proof}

\subsection{Equivariant motivic spectral sequence}
Equivariant higher Chow groups enjoy certain good properties as their ordinary counterparts.
\begin{proposition}[Localization theorem] \cite[Theorem 4.1]{LS1} \label{Localization}
Let $X \in \mathbf{Sch}^G_k$, $i: W \to X$ a $G$-invariant closed subscheme with open complement $j: U \to X$. Then for each $p$, there is a long exact sequence
$$
\ldots \to CH_p(G,W,r) \xrightarrow{i^*} CH_p(G,X,r) \xrightarrow{j^*} CH_p(G,U,r) \xrightarrow{\delta} CH_p(G,W,r-1) \rightarrow \ldots
$$
\end{proposition}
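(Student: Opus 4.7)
The strategy is to reduce to Bloch's localization theorem for ordinary higher Chow groups, exploiting the fact that the $G$-action in our setup lives only on $X$ while $\Delta^r$ carries the trivial $G$-action. The aim is to produce a short exact sequence of cycle complexes, exact up to quasi-isomorphism, and then pass to the associated long exact sequence in homology.

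First, I would consider the sequence
$$
0 \to z_p(G,W,*) \xrightarrow{i_*} z_p(G,X,*) \xrightarrow{j^*} z_p(G,U,*)
$$
and verify that it is exact on the nose. Using the orbit decomposition defining $z_p(G,X,r)$, each $G$-orbit of dimension $p+r$ in $X \times \Delta^r$ either lies inside $W \times \Delta^r$, in which case it is in the image of $i_*$, or meets $U \times \Delta^r$ properly, in which case its restriction yields the corresponding $U$-orbit and $j^*$ sends it accordingly. The stabilizer group $G_x$, and hence the $K_0$-decoration attached to each orbit, is preserved under both $i_*$ and $j^*$, so exactness holds at once.

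The crux is to show that the induced map $z_p(G,X,*)/i_* z_p(G,W,*) \to z_p(G,U,*)$ is a quasi-isomorphism, i.e., that every equivariant cycle on $U$ meeting faces properly can be extended, up to a boundary, to one on $X$ meeting faces properly. This is an equivariant moving lemma, and is the main obstacle. My plan is to adapt Bloch's construction of the moving homotopy, which is built from translations acting on the $\Delta^r$-factor. Because $G$ acts trivially on $\Delta^r$, these translations are automatically $G$-equivariant and send $G$-stable closed subsets to $G$-stable closed subsets; the $K_0$-decoration on each orbit is intrinsic to the orbit and is carried along the homotopy with no additional bookkeeping. What must be checked is that Bloch's combinatorial dimension bounds for face intersections survive the restriction to $G$-stable subsets; this should be routine but delicate, since one also has to ensure the moved cycles define classes in the equivariant cycle complex with the correct stabilizer data.

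Once the moving lemma is established, the snake lemma applied to the short exact sequence of complexes (now exact up to quasi-isomorphism) produces the connecting homomorphism $\delta$ and the desired long exact sequence. As indicated, the hard part is the equivariant moving lemma itself: Bloch's non-equivariant proof is already intricate, and even though the $G$-action on $\Delta^r$ is trivial, each step of his construction must be verified with $G$-stability imposed throughout.
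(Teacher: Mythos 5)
This proposition is not proved in the paper at all: it is quoted verbatim from Levine--Serp\'e \cite[Theorem 4.1]{LS1}, so the only ``proof'' the paper offers is the citation. Your reduction of the statement to a short exact sequence of complexes is correct as far as it goes: the sequence $0 \to z_p(G,W,*) \xrightarrow{i_*} z_p(G,X,*) \xrightarrow{j^*} z_p(G,U,*)$ is indeed exact, since an orbit closure in $X\times\Delta^r$ either lies in $W\times\Delta^r$ (and the dimension conditions relative to $W$ and to $X$ agree for such subsets) or has generic point in $U\times\Delta^r$, and the $K_0(G_x,\mathrm{Spec}(k(x)))$-decoration is unchanged under either operation. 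The gap is everything after that. The assertion that $z_p(G,X,*)/i_*z_p(G,W,*) \to z_p(G,U,*)$ is a quasi-isomorphism \emph{is} the theorem, and you defer it entirely to an ``equivariant moving lemma'' that you do not prove. Worse, the tool you point to --- Bloch's homotopies built from translations on the $\Delta^r$-factor --- is not the tool that proves localization even in the non-equivariant case: those translations establish homotopy invariance and the ``easy'' moving lemma (putting cycles in good position with respect to faces), but they cannot extend a cycle on $U\times\Delta^r$ meeting faces properly to one on $X\times\Delta^r$ meeting faces properly. Bloch's original 1986 localization argument was incomplete on exactly this point; the accepted proofs (Bloch's 1994 moving lemma via blow-ups of faces, or Levine's techniques of localization and the homotopy coniveau tower) use genuinely different and much more elaborate constructions. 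So the plan as stated would not go through even for trivial $G$.

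In the equivariant case there is a further difficulty you pass over: whatever moving construction one uses involves generic choices (translations, projections, blow-up centers) that are not $G$-invariant, so one must either make the construction equivariant or average over $G$, and this is where the standing hypothesis that $|G|$ is invertible and the quasi-projectivity of $X$ enter. The actual proof in \cite{LS1} does not proceed by a direct cycle-level moving lemma on the Bredon complex; it goes through the equivariant homotopy coniveau tower, where localization is inherited from the localization theorem for equivariant $G$-theory with supports together with Levine's axiomatic localization machinery, and is then transported to the cycle complexes via the identification of the layers of the tower. If you want to keep your cycle-theoretic route, you must either carry out an equivariant version of Bloch's 1994 argument in full, or (more realistically) cite \cite{LS1} as the paper does.
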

Using this, Levine and Serp\'e show that

\begin{proposition}[Equivariant motivic spectral sequence] \cite[Corollary 3.8]{LS1} \label{SpectralSequence}
Let $X \in \mathbf{Sch}^G_k$. There is a strongly convergent spectral sequence 
\begin{equation} \label{EquivariantSpectralSequence}
E^{p,q}_1 = CH_{-p}(G,X,-p-q) \Rightarrow G_{-p-q}(G,X).
\end{equation}
\end{proposition}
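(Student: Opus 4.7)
The plan is to construct the spectral sequence from a tower of $K$-theory spectra filtered by dimension of support on the simplicial scheme $X\times\Delta^\bullet$, following the equivariant generalization of the Bloch--Lichtenbaum/Friedlander--Suslin/Levine construction of the motivic spectral sequence.

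First, for each $p$, I would introduce the simplicial spectrum $G^{(p)}(G,X,\bullet)$ whose $r$-simplices are the Quillen $K$-theory spectra of the full subcategories of $\mathbf{Coh}_G(X\times\Delta^r)$ consisting of equivariant coherent sheaves supported on elements of $S^{G,X}_{(p)}(r)$. The proper-intersection condition in the definition of $S^{G,X}_{(p)}(r)$ is exactly what ensures the coface and codegeneracy maps of $\Delta^\bullet$ restrict to these subcategories, so this is a well-defined simplicial spectrum. Its geometric realization $G^{(p)}(G,X)$ sits inside a tower
$$\cdots \to G^{(p-1)}(G,X) \to G^{(p)}(G,X) \to \cdots \to G(G,X),$$
exhaustive because every $G$-equivariant coherent sheaf has support of some finite dimension, with colimit $G(G,X)$ by homotopy invariance of equivariant $G$-theory combined with the contractibility of $|\Delta^\bullet|$.

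Next I would identify the successive quotients. Applying the equivariant version of Quillen's devissage (Thomason's localization theorem) at each simplicial level, one obtains
$$G^{(p)}(G,X,r)/G^{(p-1)}(G,X,r) \simeq \bigvee_{[x]\in X^G_{(p)}(r)} G(G_x,\mathrm{Spec}(k(x))),$$
which on $\pi_0$ is precisely the group $z_p(G,X,r)$ of Definition~\ref{higherChowgroup}. The key technical step, modelled on Bloch--Lichtenbaum/Friedlander--Suslin, is to show that after geometric realization the higher homotopy of the residue-field factors is killed by the simplicial direction, so that the cofibre $G^{(p)}(G,X)/G^{(p-1)}(G,X)$ is, up to a degree shift, the Eilenberg--MacLane spectrum of the cycle complex $z_p(G,X,*)$; in particular its homotopy groups are the equivariant higher Chow groups $CH_p(G,X,\bullet)$.

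The exact couple attached to the tower then yields a spectral sequence whose $E_1$ page is
$$E_1^{p,q} = \pi_{-p-q}\bigl(G^{(-p)}(G,X)/G^{(-p-1)}(G,X)\bigr) \cong CH_{-p}(G,X,-p-q),$$
converging to $\pi_{-p-q}(G(G,X)) = G_{-p-q}(G,X)$. Strong convergence follows from the boundedness of the filtration on each total degree (using that $X$ is of finite type, so dimensions of support are bounded above) together with exhaustiveness of the tower. The principal obstacle is the layer identification: the decomposition at a fixed simplicial level is essentially formal via equivariant devissage, but the passage to the Eilenberg--MacLane realization of $z_p(G,X,*)$ requires the delicate homotopical analysis that is the heart of the construction, and for this I would invoke the arguments in \cite{LS1} directly. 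Compatibility of the resulting spectral sequence with the localization sequence of Proposition~\ref{Localization} is then automatic from the tower construction.
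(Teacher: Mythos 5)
The paper offers no proof of this proposition: it is quoted verbatim from \cite{LS1} (Corollary 3.8), so there is nothing internal to compare your argument against. That said, your sketch is a faithful outline of the Levine--Serp\'e construction being cited --- the tower of simplicial $G$-theory spectra filtered by dimension of support is exactly the tower $G_{(p)}(G,X,-)$ that the paper itself invokes in Section 4, the layer decomposition over $[x]\in X^G_{(p)}(r)$ into $G(G_x,\mathrm{Spec}(k(x)))$ via equivariant devissage is correct, and you rightly identify the one genuinely hard step (showing the realized cofibre is an Eilenberg--MacLane spectrum on the cycle complex, which requires the moving-lemma machinery of Bloch--Lichtenbaum/Friedlander--Suslin/Levine rather than any formal ``the simplicial direction kills higher homotopy'' argument) and appropriately defer it to \cite{LS1}. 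As a proof sketch for a cited result this is accurate and complete in structure.
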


\section{Reconstruction Theorem}

From now on, we will assume that $k$ contains all the $n$-th roots of unity where $n:=|G|$. With our hypotheses, the ring $RG$ only depends on the characteristic of $k$. 
In this section, we will investigate the relation between equivariant higher Chow groups and the ordinary higher Chow groups of fixed point subschemes under cylic subgroups.

\subsection{Comparision morphism} For any cyclic subgroup $\sigma \subset G$ of order $m$,
let $t$ be the generator of the dual group $\hat{\sigma}$ of homomorphisms $\sigma \to k^*$.
We have
$$
R \sigma \cong \Lambda \hat{\sigma} \cong \Lambda[t]/(t^m-1) \cong \prod_{d|m} \Lambda[t]/(\Phi_d(t))
$$
where $\Phi_d(t)$ is the $d$-th \textit{cyclotomic polynomial}. Denote by $\tilde{R} \sigma$ the factor of $R \sigma$ corresponding to $\Lambda[t]/(\Phi_m(t))$ which is independent of the choice of the generator $t$.
The \textit{normalizer} $N(\sigma)$ of $\sigma$ acts naturally on $R\sigma$ and $\tilde{R}\sigma$. 
Moreover, $N(\sigma)$ acts on the \textit{fixed point subscheme} $X^{\sigma}$ of $X$ under $\sigma$ hence acts on $CH_{*}(X^{\sigma}, *)$.
Note that by our assumption, the functor $-\otimes \tilde{R}\sigma$ is exact on the cateogry of $\Lambda$-modules and 
for any subgroup $H \subset G$, taking $H$-invariants $(-)^H$ is an exact functor from the category of $\Lambda H$-modules to the category of $\Lambda$-modules.

Let $\Gamma$ be a set of representatives for the conjugacy classes of cyclic subgroups of $G$. We define the morphism 
\begin{equation} \label{Morphism}
\pi(X) : \prod_{\sigma \in \Gamma} (CH_p (X^{\sigma}, r) \otimes \tilde{R}\sigma)^{N(\sigma)} \to CH_p(G, X, r) \otimes \Lambda
\end{equation}
as the composition of the inclusion 
$$
\prod_{\sigma \in \Gamma} (CH_p (X^{\sigma}, r) \otimes \tilde{R}\sigma)^{N(\sigma)} \hookrightarrow \prod_{\sigma \in \Gamma} (CH_p (X^{\sigma}, r) \otimes R\sigma)^{N(\sigma)}
$$
given by the embedding $\tilde{R} \sigma  \hookrightarrow R \sigma$,
the isomorphism
$$
\prod_{\sigma \in \Gamma} (CH_p (X^{\sigma}, r) \otimes R\sigma)^{N(\sigma)} \xrightarrow{\sim} \prod_{\sigma \in \Gamma} ((CH_p (\sigma, X^{\sigma}, r))\otimes \Lambda)^{N(\sigma)}
$$
given by \eqref{TrivialAction},
the obvious inclusion 
$$
\prod_{\sigma \in \Gamma} (CH_p (\sigma, X^{\sigma}, r)\otimes \Lambda )^{N(\sigma)} \hookrightarrow \prod_{\sigma \in \Gamma} CH_p (\sigma, X^{\sigma}, r) \otimes \Lambda,
$$
the push-forward
$$
\prod_{\sigma \in \Gamma} CH_p (\sigma, X^{\sigma}, r)\otimes \Lambda \to \prod_{\sigma \in \Gamma} CH_p (\sigma, X, r) \otimes \Lambda
$$
along closed embeddings $X^{\sigma} \hookrightarrow X$,
and the product of induction maps \eqref{InducedMap}
$$
\prod_{\sigma \in \Gamma} CH_p (\sigma, X, r)\otimes \Lambda \to CH_p (G, X, r)\otimes \Lambda.
$$

Replace equivariant higher Chow groups by the $K$-groups of equivariant cohenrent sheaves everywhere, we obtain the homomorphism
\begin{equation} \label{FourthIdentity}
\alpha_{*}(X): \prod_{\sigma \in \Gamma} (G_{*}(X^{\sigma}) \otimes \tilde{R} \sigma)^{N(\sigma)} \to G_{*}(G,X) \otimes \Lambda
\end{equation}
defined by Vistoli \cite[Section 3]{AV1}. We will show that $\pi(X)$ is an isomorphism of $\Lambda$-modules.
\subsection{A simple case}

\begin{proposition} \label{FirstIdentity}
Let $X: = \coprod_{y} \mathrm{Spec}(k(y))$ be a $0$-dimensional scheme where $G$ acts transitively on a set of points then 
\begin{equation} \label{Simple}
\alpha_{0}(X): \prod_{\sigma \in \Gamma} (K_0(X^{\sigma}) \otimes \tilde{R}\sigma)^{N(\sigma)} \to K_0(G, X) \otimes \Lambda
\end{equation}
is an isomorphim.
\end{proposition}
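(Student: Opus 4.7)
The plan is to reduce Proposition~\ref{FirstIdentity} to the Artin--Solomon type decomposition of the representation ring of a point stabilizer, following the template of Vistoli's argument for the transitive case in \cite[Section 3]{AV1}.

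First, I would pick a closed point $y \in X$ and set $H := G_y$. Transitivity of the $G$-action yields a $G$-equivariant isomorphism $X \cong G \times^H \mathrm{Spec}(k(y))$, and the standard Frobenius reciprocity identification for equivariant $K_0$ provides a canonical isomorphism
$$
K_0(G, X) \otimes \Lambda \;\cong\; K_0(H, \mathrm{Spec}(k(y))) \otimes \Lambda.
$$

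Second, I would unpack the left-hand side of \eqref{Simple}. For each cyclic $\sigma \subset G$, the fixed point scheme $X^{\sigma}$ is the disjoint union of the subschemes $(\mathrm{Spec}(k(y)))^{g^{-1}\sigma g}$, indexed by cosets $gH$ satisfying $g^{-1}\sigma g \subseteq H$. Reorganizing these contributions by $\sigma$-$H$ double cosets in $G$ and then passing to $N(\sigma)$-invariants, a standard double-coset bookkeeping rewrites the LHS as a product indexed by the set $\Gamma_H$ of $H$-conjugacy classes of cyclic subgroups of $H$:
$$
\prod_{\sigma' \in \Gamma_H} \bigl( K_0((\mathrm{Spec}(k(y)))^{\sigma'}) \otimes \tilde{R}\sigma' \bigr)^{N_H(\sigma')}.
$$

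Finally, I would apply Vistoli's Artin--Solomon type decomposition \cite[Theorem 1]{AV1} to the $H$-scheme $\mathrm{Spec}(k(y))$, which identifies this product with $K_0(H, \mathrm{Spec}(k(y))) \otimes \Lambda$. Combined with step~1, this produces the required isomorphism, provided one checks that the composite recovers $\alpha_0(X)$ as defined (pushforward along $X^{\sigma} \hookrightarrow X$ followed by induction from $\sigma$ to $G$). The principal obstacle is the double-coset bookkeeping in step~2 together with this compatibility check; both are notationally heavy but conceptually routine once Frobenius reciprocity and the transitivity reduction are in place.
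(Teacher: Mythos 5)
Your steps 1 and 2 track the paper's proof closely: the paper also fixes a point $x$, uses the induction equivalence to identify $K_0(G,X)$ with $K_0(G_x,\mathrm{Spec}\,k(x))$, and performs your ``double-coset bookkeeping'' via Lemma~\ref{ElementaryLemma}, arriving at the same product over $H$-conjugacy classes of cyclic subgroups of $H=G_x$. The divergence is in your step 3, and this is where I see a real problem. After the reduction, what remains is exactly the proposition itself for the pair $(H,\mathrm{Spec}\,k(y))$ --- a single point on which $H$ may still act \emph{non-trivially} through automorphisms of the residue field. That is the irreducible case carrying all the content, and you dispose of it by citing \cite[Theorem 1]{AV1}, i.e.\ the general reconstruction theorem of which Proposition~\ref{FirstIdentity} is a special case. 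Logically this is admissible (Vistoli's proof is independent), but it is circular in spirit: you have reduced the statement to a smaller instance of the same statement and then outsourced that instance wholesale, so no actual argument for the point case appears in your proof. It also defeats the purpose of the proposition within this paper, whose base-case argument is meant to be re-run at the level of cycle complexes in the proof of Theorem~\ref{MainTheorem} (and whose Corollary~\ref{Vistoli} later \emph{recovers} Vistoli's theorem, so the paper deliberately avoids assuming it here).

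What the paper does instead for the remaining point case $\mathrm{Spec}\,F$ is: (i) observe that only cyclic subgroups acting trivially on $F$ contribute, and introduce the inertia group $G_{(F)}=\ker(G\to\mathrm{Aut}(F/k))$; (ii) apply Lemma~\ref{ElementaryLemma} once more to rewrite the invariants as $\bigl(\bigl(\prod_{\sigma}K_0(\mathrm{Spec}\,F)\otimes\tilde R\sigma\bigr)^{G_{(F)}}\bigr)^{G/G_{(F)}}$; (iii) use the trivial-action case --- the purely representation-theoretic decomposition of $RG_{(F)}$ into the cyclotomic pieces $\tilde R\sigma$, for which only \cite[Proposition 1.5]{AV1} is invoked --- to identify the inner invariants with $K_0(G_{(F)},\mathrm{Spec}\,F)\otimes\Lambda$; and (iv) conclude by Galois descent for equivariant $K$-theory with $|G|$ inverted, $(K_0(G_{(F)},\mathrm{Spec}\,F)\otimes\Lambda)^{G/G_{(F)}}\cong K_0(G,\mathrm{Spec}\,F)\otimes\Lambda$. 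To turn your proposal into a complete proof you should replace the appeal to \cite[Theorem 1]{AV1} by this descent-plus-Artin-decomposition argument (or an equivalent one); as written, the step that actually proves something has been skipped.
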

\begin{proof}
If $G$ acts trivially on $X$, then $X = \mathrm{Spec}(F)$ for a field extension $F/k$. In this case, both sides of \eqref{Simple} are free $\Lambda$-modules whose rank are the number of conjugacy classes of $G$. 
The injectivity of $\alpha_{0}(\mathrm{Spec}(F))$ is proved in \cite[Proposition 1.5]{AV1}. 

In general, we fix an arbitrary element $x$. Since $G$ acts transitively on the index set, we have
$$
K_0(G,X) = K_0(G_x, \mathrm{Spec} \ k(x)).
$$
Let $\mathcal{C}$ be the set of cyclic subgroups of $G$ and $\mathcal{D}(y)$ the set of cyclic subgroups of $G_y$. We also have
$$
\prod_{\sigma \in \mathcal{C}} K_0(X^{\sigma}) \cong \prod_{y} \prod_{\sigma \in \mathcal{D}(y)} K_0(\mathrm{Spec} \ k(y))^{\sigma}).
$$

Denote $\Psi$ for a set of representatives for the conjugacy classes of cyclic subgroups of $G_x$ and let $\mathcal{D}: = \mathcal{D}(x)$.
By Lemma \ref{ElementaryLemma} below, there are isomorphisms
\begin{align*}
\prod_{\sigma \in \Gamma} (K_0(X^{\sigma}) \otimes \tilde{R}\sigma)^{N_G(\sigma)} & \cong (\prod_{\sigma \in \mathcal{C}} K_0(X^{\sigma}) \otimes \tilde{R}\sigma)^{G} \\
& \cong (\prod_{y} \prod_{\sigma \in \mathcal{D}} K_0(\mathrm{Spec}\ k(x))^{\sigma}) \otimes \tilde{R}\sigma)^{G} \\
& \cong (\prod_{\sigma \in \mathcal{D}} K_0((\mathrm{Spec} \ k(x))^{\sigma}) \otimes \tilde{R}\sigma)^{G_x} \\
& \cong \prod_{\sigma \in \Psi} (K_0((\mathrm{Spec} \ k(x))^{\sigma}) \otimes \tilde{R}\sigma)^{N_{G_x}(\sigma)}.
\end{align*}

Remark that if $\sigma$ acts non-trivially on $\mathrm{Spec} \ k(x)$ then $(\mathrm{Spec} \ k(x))^{\sigma} = \emptyset$. 
Hence, by replacing $G$ with $G_x$, $X$ with $\mathrm{Spec} \ k(x)$ and changing notation, one reduces the statement to the case of a point, i.e., to prove that 
\begin{equation} \label{SecondIdentity}
\prod_{\sigma \in \Gamma'} (K_0(\mathrm{Spec} \ F) \otimes \tilde{R}\sigma)^{N_G(\sigma)} \cong K_0(G, \mathrm{Spec} \ F) \otimes \Lambda 
\end{equation}
where $F/k$ is a finitely generated field extension and $\Gamma'\subset \Gamma$ is the subset consisting of cyclic subgroups of $G$ which act $trivially$ on $F$. 

Denote $G_{(F)}$ for the \textit{inertia group} of $\mathrm{Spec} \ F$, i.e.,
$$
G_{(F)}: = \mathrm{ker}(G \to \mathrm{Aut}(F / k)),
$$
and $\mathcal{I}$ for the set of cyclic subgroup of $G_{(F)}$. We have
\begin{align*}
\prod_{\sigma \in \Gamma'} (K_0(\mathrm{Spec} \ F) \otimes \tilde{R}\sigma)^{N_G(\sigma)} \cong & (\prod_{\sigma \in \mathcal{I}} K_0(\mathrm{Spec} \ F) \otimes \tilde{R}\sigma)^G \\
\cong & ((\prod_{\sigma \in \mathcal{I}} K_0(\mathrm{Spec} \ F) \otimes \tilde{R}\sigma)^{G_{(F)}})^{G/G_{(F)}} \\
\cong & (K_0(G_{(F)}, \mathrm{Spec} \ F)\otimes \Lambda)^{G/G_{(F)}} \\
\cong & K_0(G,\mathrm{Spec} \ F) \otimes \Lambda.
\end{align*}
The first isomorphism holds by Lemma \ref{ElementaryLemma}. The third isomorphism holds because $G_{(F)}$ acts trivially on $F$. The last isomorphism is a special case of the descent property for equivariant algebraic $K$-theory with $\Lambda = \mathbb{Z}[1/n]$-coefficients. Hence \eqref{SecondIdentity} is an isomorphism as claimed.
\end{proof}

\begin{lemma} \label{ElementaryLemma}
Let the group $G$ act on the left on a set $\mathcal{S}$ and let $\mathcal{T}$ be a set of representatives for the orbits. Assume that $G$ acts on the left on a product of abelian
groups of the type $\prod_{s \in \mathcal{S}} M_s$ in such a way that for any $g \in G$
$$
gM_s = M_{gs}.
$$
For each $t\in \mathcal{T}$ let $G_t$ be the stabilizer of $t$ in $G$. Then the canonical projection
$$
\prod_{s \in \mathcal{S}} M_s \to \prod_{t \in \mathcal{T}} M_t
$$
induces an isomorphism
$$
(\prod_{s \in \mathcal{S}} M_s)^G \to \prod_{t \in \mathcal{T}} (M_t)^{G_t}.
$$
\end{lemma}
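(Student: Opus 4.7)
The plan is to verify directly that the canonical projection $p : \prod_{s \in \mathcal{S}} M_s \to \prod_{t \in \mathcal{T}} M_t$ restricts to a map $(\prod_s M_s)^G \to \prod_t (M_t)^{G_t}$ and then to construct an explicit two-sided inverse. The argument is elementary bookkeeping about group actions on sets, and I do not anticipate any genuine obstacle.

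First I would check that $p$ lands in the correct target. If $(m_s)_s$ is $G$-invariant and $t \in \mathcal{T}$, then for any $g \in G_t$ the hypothesis $gM_t = M_{gt} = M_t$ combined with $G$-invariance forces $g \cdot m_t = m_{gt} = m_t$, so the $t$-component lies in $(M_t)^{G_t}$. Injectivity is then immediate from the fact that $\mathcal{T}$ meets every $G$-orbit exactly once: any $s \in \mathcal{S}$ can be written $s = gt$ with $t \in \mathcal{T}$, so $G$-invariance recovers $m_s = g \cdot m_t$ from $m_t$, and vanishing of all $m_t$ forces $m_s = 0$.

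For surjectivity, which carries the only real content, I would take a family $(n_t)_t$ with $n_t \in (M_t)^{G_t}$ and set $m_s := g \cdot n_t$ whenever $s = gt$ for some $t \in \mathcal{T}$ and $g \in G$. The step requiring care is well-definedness: if $gt = g't$, then $g^{-1}g' \in G_t$, and $G_t$-invariance of $n_t$ gives $g' \cdot n_t = g \cdot n_t$, so the definition does not depend on the choice of $g$. Once this is established, $G$-invariance of $(m_s)_s$ is automatic since $h \cdot m_{gt} = (hg) \cdot n_t = m_{hgt}$, and $p$ applied to this family obviously returns $(n_t)_t$. This independence-of-representative verification is the sole point where one must actually pause; everything else is formal.
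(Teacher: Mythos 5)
Your proof is correct and is precisely the direct verification the paper has in mind when it dismisses the lemma with ``It is straightforward'': the paper supplies no argument at all, and your check of well-definedness of the section $m_{gt} := g\cdot n_t$ (independence of the choice of $g$ via $G_t$-invariance of $n_t$) is the one point of substance, handled correctly. Nothing further is needed.
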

\begin{proof}
It is straightforward.
\end{proof}
In particular, if $\mathcal{C}$ is the set of cyclic subgroups of $G$ and $\Gamma$ is a set of representatives for the conjugacy classes of cyclic subgroups of $G$, then 
$$
\prod_{\sigma \in \Gamma} (K_0(X^{\sigma}) \otimes \tilde{R}\sigma)^{N_G(\sigma)} \cong (\prod_{\sigma \in \mathcal{C}} K_0(X^{\sigma}) \otimes \tilde{R}\sigma)^{G} \cong 
(\prod_{g \in G} K_0(X^{g}) \otimes \Lambda)^{G}.
$$

\subsection{The general case}

\begin{theorem}[Reconstruction Theorem] \label{MainTheorem}
The map $\pi(X)$ in \eqref{Morphism} is an isomorphism for any $X \in \mathbf{Sch}^G_k$.
\end{theorem}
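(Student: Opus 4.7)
The plan is to imitate Vistoli's proof of the analogous theorem for equivariant $K$-theory \cite[Section 3]{AV1}: proceed by Noetherian induction on $X$, using the localization sequence of Proposition \ref{Localization} together with the five lemma, with the base case being a higher-$r$ extension of Proposition \ref{FirstIdentity}.

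\textbf{Localization ladder.} For a $G$-invariant closed immersion $i: W \hookrightarrow X$ with open complement $j: U \hookrightarrow X$, I first produce a commutative ladder whose bottom row is Proposition \ref{Localization} tensored with $\Lambda$ and whose top row is obtained by applying the functor $(-\otimes_{\Lambda}\tilde{R}\sigma)^{N(\sigma)}$ to Bloch's (non-equivariant) localization sequence for $W^\sigma \hookrightarrow X^\sigma \hookleftarrow U^\sigma$ and then taking the product over $\sigma \in \Gamma$. The top row is exact because $|N(\sigma)|$ divides $n$ and is therefore invertible in $\Lambda$, so $(-\otimes_{\Lambda}\tilde{R}\sigma)^{N(\sigma)}$ is exact on $\Lambda$-modules. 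Commutativity of the squares away from the connecting map $\delta$ follows from the construction of $\pi$ via Lemma \ref{Commutativity} and the standard functoriality of closed push-forward and flat pull-back on equivariant higher Chow groups.

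\textbf{Induction and base case.} With this ladder in place, the five lemma reduces the problem to verifying $\pi$ is an isomorphism for $W$ and $U$. Noetherian induction lets us assume $\pi(Z)$ is an isomorphism for every proper $G$-invariant closed $Z \subsetneq X$, so it suffices to identify a $G$-invariant dense open $U \subseteq X$ on which $\pi(U)$ is verifiable directly. Decomposing $U$ into $G$-orbits of connected components we may assume $G$ permutes the connected components of $U$ transitively; by continuity of higher Chow groups under filtered limits, the statement then follows from the corresponding statement for $X = \coprod \mathrm{Spec}(k(y))$ with transitive $G$-action, which is the higher-$r$ analogue of Proposition \ref{FirstIdentity}. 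That proposition's proof extends essentially verbatim to arbitrary $r$: Lemma \ref{ElementaryLemma} reduces to $X = \mathrm{Spec}(F)$ for a finite extension $F/k$; on the inertia subgroup $G_{(F)} \subseteq G$ the action on $F$ is trivial, so \eqref{TrivialAction} identifies $z_p(G_{(F)}, X, *) \otimes \Lambda$ with $z_p(X, *) \otimes RG_{(F)}$; matching the Artin-type decomposition of $RG_{(F)}$ against the source of $\pi$, and then a descent argument with $\Lambda$-coefficients (using that $|G/G_{(F)}|$ is invertible), complete the check.

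The main obstacles I anticipate are, first, verifying that $\pi$ intertwines the two connecting morphisms $\delta$ (since $\delta$ is not structurally a pushforward or pullback of cycles, this forces us back to the simplicial model of the localization sequence), and second, justifying the passage from the generic-fibre base case to a fixed dense open $U$, which is essentially a continuity argument for equivariant higher Chow groups. A more conceptual route would be to lift $\pi$ to a map of cycle complexes — or better, to the $K$-theory spectra as done in Section 4 — so that localization triangles become intrinsic and both compatibility issues dissolve.
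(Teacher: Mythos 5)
Your proposal follows essentially the same route as the paper's proof: localization plus Noetherian induction reduces to the zero-dimensional transitive case, and the base case is handled exactly as in Proposition \ref{FirstIdentity} by passing through Lemma \ref{ElementaryLemma}, the inertia group, the trivial-action isomorphism \eqref{TrivialAction} at the level of cycle complexes, and descent with $\Lambda$-coefficients; the two issues you flag (compatibility with $\delta$ and the limit argument down to generic points) are real but are resolved precisely as you suggest, since $\pi$ is defined on the cycle complexes themselves. The only slip is that $F/k$ should be a finitely generated field extension, not a finite one; nothing in the argument uses finiteness.
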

\begin{proof}
Both sides of \eqref{Morphism} satisfy localization and $\pi(-)$ is natural with localization. 
Indeed, by Lemma \ref{Commutativity}, each component of $\pi$ is compatible with the push-forward $i_{*}$ given by closed immersion $i: Z \hookrightarrow X$ and the pull-back $j^{*}$ given by open immersion $j: U \hookrightarrow X$. 
Hence, we only need to show that $\pi(X)$ is an isomorphism for $X = \coprod_x \mathrm{Spec}(k(x))$ when $G$ acts transitively on a set of points. 

By the same argument as in Proposition \ref{FirstIdentity}, we reduce our problem to the case of a point, i.e., $\pi(\mathrm{Spec}(F))$ is an isomorphism for any finitely generated field extension $F/k$. 
In this case
$$
(CH_p( (\mathrm{Spec} \ F)^{\sigma}, r) \otimes \tilde{R}\sigma)^{N(\sigma)} = (CH_p(\mathrm{Spec} \ F), r) \otimes \tilde{R}\sigma)^{N(\sigma)}
$$
if $\sigma$ acts trivially on $\mathrm{Spec}(F)$ and 
$$
(CH_p((\mathrm{Spec} \ F)^{\sigma}, r) \otimes \tilde{R}\sigma)^{N(\sigma)} = 0
$$
otherwise.

Denote by $\Gamma'$ the subset of $\Gamma$ consisting of cyclic subgroups of $G$ which act trivially on $F$.
We have
\begin{align*}
\prod_{\sigma \in \Gamma'} (z_p (\mathrm{Spec} \ F, r)\otimes \tilde{R}\sigma)^{N(\sigma)} 
& = \prod_{\sigma \in \Gamma'}((\bigoplus_{x \in (\mathrm{Spec} \ F)_{(p)}(r)} \mathbb{Z}.x)\otimes \tilde{R}\sigma)^{N(\sigma)} \\
& = \prod_{\sigma \in \Gamma'} ((\bigoplus_{x \in (\mathrm{Spec} \ F)_{(p)}(r)} K_0(\mathrm{Spec} \ k(x)))\otimes \tilde{R}\sigma)^{N(\sigma)} \\
& = \prod_{\sigma \in \Gamma'} (\bigoplus_{[x] \in (\mathrm{Spec} \ F)^G_{p}(r)} \bigoplus_{y \in G.x} K_0(\mathrm{Spec} \ k(y)) \otimes \tilde{R}\sigma)^{N(\sigma)} \\
& = \prod_{\sigma \in \Gamma'} (\bigoplus_{[x]} K_0(\coprod_{y \in G.x}\mathrm{Spec} \ k(y)) \otimes \tilde{R}\sigma)^{N(\sigma)} \\
& = \bigoplus_{[x]} \prod_{\sigma \in \Gamma'} (K_0(\coprod_{y \in G.x}\mathrm{Spec} \ k(y))\otimes \tilde{R}\sigma)^{N(\sigma)}.
\end{align*}
If $\sigma \notin \Gamma'$, i.e., $\sigma$ acts non-trivially on $F$, then the scheme $\coprod_{y \in G.x} \mathrm{Spec} \ k(y)$ with $[x] \in (\mathrm{Spec} \ F)^G_{p}(r)$ has no fixed point subscheme under $\sigma$. 
If $\sigma \in \Gamma'$ then $\sigma$ acts trivially on $\coprod_{y \in G.x} \mathrm{Spec} \ k(y)$. Hence,
\begin{align*}
\bigoplus_{[x]} \prod_{\sigma \in \Gamma'} (K_0(\coprod_{y \in G.x}\mathrm{Spec} \ k(y))\otimes \tilde{R}\sigma)^{N(\sigma)}) 
& = \bigoplus_{[x]} \prod_{\sigma \in \Gamma} (K_0((\coprod_{y \in G.x}\mathrm{Spec} \ k(y))^{\sigma})\otimes \tilde{R}\sigma)^{N(\sigma)}) \\
& = \bigoplus_{[x]} K_0(G, \coprod_{y \in G.x}\mathrm{Spec}  \ k(y)) \otimes \Lambda \ ({\rm by \ Proposition}\ \ref{FirstIdentity})\\
& = \bigoplus_{[x]} K_0(G_x, \mathrm{Spec} \ k(x)) \otimes \Lambda \\
& = z_p(G,\mathrm{Spec}\ F, r) \otimes \Lambda.
\end{align*}
Take homologies of the associated complexes, one obtains isomorphisms
$$
\prod_{\sigma \in \Gamma}(CH_p(\mathrm{Spec}\ F, r)\otimes \tilde{R}\sigma)^{N(\sigma)} \cong CH_p(G, \mathrm{Spec}\ F, r) \otimes \Lambda.
$$
for each $p$ and $r$.
\end{proof}

\section{Applications}
Levine has constructed in \cite{ML2} a spectral sequence
\begin{equation} \label{MotivicSpectraSequence}
E^{p,q}_1 = CH_{-p}(X,-p-q) \Rightarrow G_{-p-q}(X)
\end{equation}
for any quasi-projective scheme $X$ over $k$. In this section we will show that the isomorphism \eqref{Morphism} is compatible with the spectral sequences \eqref{MotivicSpectraSequence} and\eqref{SpectralSequence}. 
We then show that the equivariant motivic spectral sequence \eqref{SpectralSequence} degenerates rationally to obtain a Riemann-Roch theorem for equivariant algebraic $K$-theory.

\subsection{Comparision morphism revisited}
In Section 3 we defined the homomorphism \eqref{FourthIdentity}
\begin{equation*}
\alpha_{*}(X): \prod_{\sigma \in \Gamma} (G_{*}(X^{\sigma}) \otimes \tilde{R} \sigma)^{N(\sigma)} \to G_{*}(G,X) \otimes \Lambda.
\end{equation*}
We show now that this morphism can actually be expressed on the level of spectra.

Assume that $H$ is a finite group which acts \textit{trivially} on a $k$-scheme $Y$ (in our application, $\sigma$ acts trivially on $X^{\sigma}$).
Let $\mathcal{I}$ be the set of \textit{irreducible representations} of $H$ over $k$. For any representation $\rho$ in $\mathcal{I}$ with representation space $V$, we define
\begin{equation*}
\rho^* : \mathbf{Coh}(Y) \to \mathbf{Coh}_H(Y)
\end{equation*}
by mapping any coherent sheaf $\mathcal{F}$ on $Y$ to $\mathcal{F} \otimes_k V$ with the action of $H$ induced by the action of $H$ on $V$. It is obvious that $\rho^*$ is an exact functor, hence it induces a map between spectra
\begin{equation*}
\rho^*: G(Y) \to G(H,Y).
\end{equation*}

Assume further that there is another group $\Pi$ acting (possibly \textit{non-trivially}) on $Y$ and $H$ in such a way that the action respects the group structure of $H$, i.e.,
$$
\alpha(h_1.h_2) = \alpha h_1 . \alpha h_2
$$
for any $\alpha \in \Pi$ and $h_1, h_2 \in H$. 
This induces actions of $\Pi$ on the spectra $G(Y)$, $G(H,Y)$ and on the set $\mathcal{I}$.
Let $\mathcal{J}$ be a subset of $\mathcal{I}$ which is closed under the action of $\Pi$, then $\prod_{\rho \in \mathcal{J}} \mathbf{Coh}(Y)$ and $\mathbf{Coh}_H(Y)$ have a natural action of $\Pi$ and the map
\begin{equation*}
\prod_{\rho \in \mathcal{J}} \rho^*: \prod_{\rho \in \mathcal{J}} \mathbf{Coh}(X) \to \mathbf{Coh}_H(X)
\end{equation*}
is an $\Pi$-equivariant map. This induces an $\Pi$-equivariant map between spectra
\begin{equation} \label{ThirdIdentity}
\prod_{\rho \in \mathcal{J}} \rho^{*}: \prod_{\mathcal{J}} G(Y) \to G(H,Y).
\end{equation}
Moreover, the inclusion $\mathcal {J} \hookrightarrow \mathcal{I}$ induces an $\Pi$-equivariant map
$$
\prod_{\mathcal{J}}G(Y) \to \prod_{\mathcal{I}}G(Y)
$$
and the following diagram
\[
\begin{tikzcd}
\prod_{\mathcal{J}}G(Y) \arrow[r, hook] \arrow[dr]
& \prod_{\mathcal{I}}G(Y) \arrow[d]\\
& G(H,Y)
\end{tikzcd}
\]
commutes. Hence we obtain a commutative diagram between \textit{homotopy fixed point spectra}
\[
\begin{tikzcd}
(\prod_{\mathcal{J}}G(Y))^{h\Pi} \arrow[r, hook] \arrow[dr]
& (\prod_{\mathcal{I}}G(Y))^{h\Pi} \arrow[d]\\
& (G(H,Y))^{h\Pi}
\end{tikzcd}
\]

Replacing $H$ by $\sigma$, $Y$ by $X^{\sigma}$ and $\Pi$ by $N(\sigma)$. Since $k$ contains enough roots of unity,
it is well-known that the set of irreducible representations of $\sigma$ over $k$ has $m$ elements where $m$ is the order of $\sigma$ and every irreducible representation of $\sigma$ is one dimensional given by multiplication by a $m$-th root of unity.
Let $\mathcal{J}$ be the set of representations given by multiplication by a $primitive \ m$-th root of unity. It is clear that $|\mathcal{J}| = \mathrm{deg} (\Phi_m(t))$, the degree of the $m$-th cyclotomic ring $\Phi_m(t)$.
Define the map 
\begin{equation} \label{5thIdentity}
\alpha(X): \prod_{\sigma \in \Gamma} (\prod_{\mathcal{J}} G(X^{\sigma})[1/n])^{hN(\sigma)} \to G(G,X)[1/n]
\end{equation}
($n = |G|$ is inverted) as the composition of the inclusion
$$
\prod_{\sigma} (\prod_{\mathcal{J}} G(X^{\sigma})[1/n])^{hN(\sigma)} \to \prod_{\sigma} (\prod_{\mathcal{I}} G(X^{\sigma})[1/n])^{hN(\sigma)}
$$
given by the inclusion $\mathcal{J} \hookrightarrow \mathcal{I}$, the morphism
$$
\prod_{\sigma} (\prod_{\mathcal{I}} G(X^{\sigma})[1/n])^{hN(\sigma)} \to \prod_{\sigma} (G(\sigma, X^{\sigma})[1/n])^{hN(\sigma)}
$$
given by \eqref{ThirdIdentity}, the natural morphism
$$
\prod_{\sigma} (G(\sigma, X^{\sigma})[1/n])^{hN(\sigma)} \to \prod_{\sigma} G(\sigma, X^{\sigma})[1/n],
$$
the push-forward
$$
\prod_{\sigma} (G(\sigma, X^{\sigma}))[1/n] \to \prod_{\sigma} G(\sigma, X)[1/n]
$$
given by the closed embedding $X^{\sigma} \hookrightarrow X$, 
and the wedge sum of induction maps
$$
\prod_{\sigma} G(\sigma, X)[1/n] \to G(G,X)[1/n].
$$

\begin{proposition}
The map $\alpha(X)$ of \eqref{5thIdentity} induces $\alpha_{*}(X)$ of \eqref{FourthIdentity} on homotopy groups.
\end{proposition}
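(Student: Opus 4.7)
The plan is to verify, one arrow at a time, that applying $\pi_*$ to each of the five spectrum-level maps composing $\alpha(X)$ reproduces the corresponding arrow in the algebraic composition defining $\alpha_*(X)$, so that functoriality of $\pi_*$ assembles them into the equality $\pi_*(\alpha(X)) = \alpha_*(X)$. The sources and targets line up once one identifies $\pi_*\bigl(\prod_{\mathcal{J}} G(X^\sigma)[1/n]\bigr) \cong G_*(X^\sigma) \otimes \Lambda[\mathcal{J}]$ and notes that $\Lambda[\mathcal{J}] \cong \tilde{R}\sigma$, so the question is genuinely about matching the composition step by step.

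For the third arrow, the canonical forgetful $E^{hN(\sigma)} \to E$ with $E = G(\sigma, X^\sigma)[1/n]$, I would invoke the homotopy fixed point spectral sequence
\[
E_2^{s,t} = H^s\bigl(N(\sigma);\, \pi_t(E)\bigr) \Rightarrow \pi_{t-s}(E^{hN(\sigma)}).
\]
Since $|N(\sigma)|$ divides $n$ and is therefore invertible on each $\pi_t(E)$, all higher group cohomology vanishes, the spectral sequence collapses onto $s = 0$, and one obtains $\pi_*(E^{hN(\sigma)}) \cong \pi_*(E)^{N(\sigma)}$ with the forgetful map realized on $\pi_*$ as the inclusion of $N(\sigma)$-invariants. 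This matches the third arrow in the definition of $\alpha_*(X)$ verbatim.

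For the first two arrows I would use that, for any finite group $H$ acting trivially on a scheme $Y$, the assembly $\prod_{\rho \in \mathcal{I}} \rho^*$ of \eqref{ThirdIdentity} induces on $\pi_*$ exactly the natural isomorphism $G_*(Y) \otimes RH \xrightarrow{\sim} G_*(H, Y)$ underlying \eqref{TrivialAction}, cf.\ \cite[Proposition 1.6]{AV1}, because the summands $\rho^*$ are induced by tensoring with the underlying representations. Applied to $H = \sigma$, and combined with the character identification $R\sigma \cong \prod_{d \mid m} \Lambda[t]/(\Phi_d(t))$ under which $\tilde{R}\sigma$ is precisely the $\Lambda$-span of the primitive $m$-th root characters — that is, of $\mathcal{J}$ — this shows that on $\pi_*$ the first two spectrum-level arrows realize the inclusion $\tilde{R}\sigma \hookrightarrow R\sigma$ followed by the isomorphism of \eqref{TrivialAction}. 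The last two arrows, proper push-forward along $X^\sigma \hookrightarrow X$ and induction from $\sigma$ to $G$, agree with their algebraic counterparts on $\pi_*$ essentially by construction, since the spectrum-level induction is the push-forward along $(X \times G)/\sigma \to X$ combined with the identification $G(G, (X\times G)/\sigma) \cong G(\sigma, X)$, exactly mirroring the construction preceding \eqref{InducedMap}.

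The main obstacle is the degeneration statement underlying the third arrow: one must confirm that the $K$-theory spectra in question are bounded below so that the homotopy fixed point spectral sequence converges, and that inverting $n$ genuinely kills the positive group cohomology of $N(\sigma)$ on each $\pi_t$. Both points are standard once one observes that $|N(\sigma)|$ divides $n$; after this input, matching the remaining four arrows amounts to a diagram chase through the constructions recalled in Section 2 and in \cite[Section 3]{AV1}.
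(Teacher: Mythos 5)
Your proposal is correct and takes essentially the same route as the paper: the key input in both is the collapse of the homotopy fixed point spectral sequence after inverting $n$, yielding $\pi_d(S^{hG}[1/n]) \cong (\pi_d S)^{G} \otimes \mathbb{Z}[1/n]$, together with the identification $\pi_d\bigl(\prod_{\mathcal{J}} G(X^{\sigma})[1/n]\bigr) \cong G_d(X^{\sigma}) \otimes \tilde{R}\sigma$ with compatible $N(\sigma)$-actions. The paper treats the arrow-by-arrow matching that you spell out as immediate and records only these two homotopy-group identifications, so your extra verifications are consistent elaboration rather than a different argument.
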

\begin{proof}
We only have to show that 
$$
\pi_d (\prod_{\mathcal{J}} G(X^{\sigma})[1/n])^{hN(\sigma)} \cong (\prod_{\mathcal{J}} G_d(X^{\sigma}) \otimes \tilde{R} \sigma)^{N(\sigma)} 
$$
and
$$
\pi_d (G(G,X)[1/n]) \cong G_d(G,X) \otimes \Lambda.
$$
The last identity is clear by definition. For the first identity, note that if $S$ is a spectrum with an action of a finite group $G$ of order $n$, then there is a natural isomorphism
\begin{equation} \label{6thIdentity}
\pi_d (S^{hG}[1/n]) \cong (\pi_d S)^{G} \otimes \mathbb{Z}[1/n].
\end{equation}
Indeed, the spectral sequence
\begin{equation*}
E_2^{p,q} = H^{-p} (G, \pi_q(S) \otimes \mathbb{Z}[1/n]) \Rightarrow \pi_{p+q} (S^{hG}[1/n])
\end{equation*}
has $E_2^{p,q} = (\pi_q(S))^G \otimes \mathbb{Z}[1/n]$ if $p=0$ and equals $0$ otherwise.
Moreover, it is easy to see that 
\begin{equation} \label{7thIdentity}
\pi_d (\prod_{\mathcal{J}} G(X^{\sigma})[1/n]) = \prod_{\mathcal{J}} G_d(X^{\sigma}) \otimes \mathbb{Z}[1/n] \cong G_d(X^{\sigma}) \otimes \tilde{R}\sigma
\end{equation}
with a compatible action of $N(\sigma)$ on both sides. Our claim follows from \eqref{6thIdentity} and \eqref{7thIdentity} .
\end{proof}

\subsection{A Riemann-Roch theorem}
The way we define $\alpha(X)$ extends naturally to a family of compatible maps $\alpha_{(*)}(X)$ between the two towers
\[
\begin{tikzcd} 
\prod_{\sigma}(\prod_{\mathcal{J}} G_{(p)}(X^{\sigma},-)[1/n])^{hN(\sigma)} \arrow{d}{\alpha_{(p)}(X)} \arrow{r} & \ldots  \arrow{r} & \prod_{\sigma}(\prod_{\mathcal{J}}(X^{\sigma})[1/n])^{hN(\sigma)} \arrow{d}{\alpha(X)}\\
G_{(p)}(G,X,-)[1/n] \arrow{r} & \ldots  \arrow{r} & G(G,X)[1/n].
\end{tikzcd}
\]
The notation $G_{(p)}(G,X,-)$ used here is inherited from \cite[Section 2.1]{LS1}. Namely, $G_{(p)}(G,X,-)$ is the simplicial spectrum whose $r$-simplices is
$$
G_{(p)}(G,X,r): = \mathrm{hocolim}_{W} \ G^{W}(G,X\times \Delta^r)
$$
the $K$-theory of equivariant coherent sheaves on $X \times \Delta^r$ with supports in $W$ 
where $W$ runs over all the closed $G$-stable subsets of $X \times \Delta^{r}$ such that $\mathrm{dim}_k \ W \cap X \times F \le p + \mathrm{dim}_k \ F$ for all faces $F \subset \Delta^r$.

The top tower induces the spectral sequence
\begin{equation} \label{1SpectralSequence}
E^{p,q}_1 = \prod_{\sigma} (CH_{-p}(X^{\sigma},-p-q) \otimes \tilde{R} \sigma)^{N(\sigma)} \Rightarrow \prod_{\sigma} (G_{-p-q}(X^{\sigma}) \otimes \tilde{R}\sigma)^{N(\sigma)}
\end{equation}
by applying operations to \eqref{MotivicSpectraSequence}.

The bottom tower induces the spectral sequence
\begin{equation} \label{2SpectralSequence}
E^{p,q}_1 = CH_{-p}(G,X,-p-q) \otimes \Lambda \Rightarrow G_{-p-q}(G,X) \otimes \Lambda
\end{equation}
which is essentially the spectral sequence \eqref{SpectralSequence} with $\Lambda = \mathbb{Z}[1/n]$-coefficients.

This shows that the map $\pi(X)$ (induced by the family $(\alpha_{(p)}(X))_p$) is natural with respect to the spectral sequences \eqref{1SpectralSequence} and \eqref{2SpectralSequence}. 
Hence, one obtains Vistoli’s reconstruction theorem for equivariant algebraic $G$-theory:
\begin{corollary} {\rm \cite[Theorem 2]{AV1}} \label{Vistoli}
For any $X \in \mathbf{Sch}^G_k$ the map
$$
\alpha_*(X): \prod_{\sigma \in \Gamma} (G_{*}(X^{\sigma}) \otimes \tilde{R} \sigma)^{N(\sigma)} \to G_{*}(G,X) \otimes \Lambda
$$
is an isomorphism of graded $\Lambda$-modules which is compatible with localization sequence.
\end{corollary}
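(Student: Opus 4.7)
The plan is to deduce the corollary from Theorem \ref{MainTheorem} by comparing the two strongly convergent spectral sequences \eqref{1SpectralSequence} and \eqref{2SpectralSequence}. The tower of maps $(\alpha_{(p)}(X))_p$ constructed at the beginning of this section produces a morphism of spectral sequences from \eqref{1SpectralSequence} to \eqref{2SpectralSequence} whose effect on abutments is precisely $\alpha_*(X)$. The first step is to verify that on each $E_1$-term the induced map agrees with the cycle-level homomorphism $\pi(X)$ of \eqref{Morphism}; this requires tracing through the construction of both spectral sequences in terms of the $K$-theory of equivariant coherent sheaves on $X \times \Delta^\bullet$ supported along the dimension filtration, together with the explicit description of the five components defining $\alpha$. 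Once this identification is in place, Theorem \ref{MainTheorem} ensures that $\pi(X)$ is an isomorphism on every $E_1$-term, and the classical comparison theorem for strongly convergent spectral sequences forces $\alpha_*(X)$ to be an isomorphism of graded $\Lambda$-modules.

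For compatibility with localization, take a $G$-equivariant closed immersion $i: W \hookrightarrow X$ with open complement $j: U \hookrightarrow X$. The right-hand side of $\alpha_*(X)$ sits in the usual equivariant $K$-theory localization long exact sequence. For the left-hand side, the $K$-theory localization sequences for the fixed-point subschemes $W^\sigma \hookrightarrow X^\sigma \hookleftarrow U^\sigma$, after applying the functors $-\otimes \tilde{R}\sigma$ and $(-)^{N(\sigma)}$ (both exact because $n$ is invertible in $\Lambda$), assemble into a corresponding long exact sequence. Since each of the five constituent maps of $\alpha$ commutes with closed pushforward and open pullback---by a $G$-theoretic analogue of Lemma \ref{Commutativity} combined with standard base change---$\alpha_*$ becomes a morphism of these long exact sequences, yielding the desired compatibility.

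The main obstacle lies in the first step: confirming that the morphism of spectral sequences induced by the tower $(\alpha_{(p)}(X))_p$ coincides on $E_1$ with the cycle-level map $\pi(X)$. Here one must check that the identification $\pi_d(S^{hN(\sigma)}[1/n]) \cong (\pi_d S)^{N(\sigma)} \otimes \Lambda$ established in the previous proposition is natural in the tower variable, and that each of the five components of $\alpha$---the inclusion $\mathcal{J} \hookrightarrow \mathcal{I}$, the exterior tensor with irreducible representations, the homotopy-fixed-point comparison, the closed pushforward, and the induction---is compatible with the simplicial structure underlying Levine--Serp\'e's equivariant cycle complex. Granting this naturality, the rest of the argument is a clean application of the comparison theorem together with the five lemma.
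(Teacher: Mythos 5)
Your proposal is correct and follows essentially the same route as the paper: the tower $(\alpha_{(p)}(X))_p$ induces a morphism from the spectral sequence \eqref{1SpectralSequence} to \eqref{2SpectralSequence} whose $E_1$-map is $\pi(X)$, so Theorem \ref{MainTheorem} plus strong convergence forces the map on abutments, namely $\alpha_*(X)$, to be an isomorphism. The paper states this comparison only in passing and does not spell out the $E_1$-identification or the localization compatibility, so your added detail on those points is consistent with, and slightly more careful than, the original.
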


The following lemma is a weak version of \cite[Corollary 5.6]{LS1} but the proof is simpler:
\begin{corollary} 
Let $X \in \mathbf{Sch}^G_k$ and let $G$ act on $X \times \mathbb{A}^1$ via the given action on $X$ and the trivial action on $\mathbb{A}^1$. 
Then the pull-back via the projection $p: X \times \mathbb{A}^1 \to X$ induces an isomorphism
\begin{equation}
p_X^{*}: CH_p(G, X, r) \otimes \Lambda \to CH_{p+1}(G,X\times \mathbb{A}^1, r) \otimes \Lambda.
\end{equation}
\end{corollary}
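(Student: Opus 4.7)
The plan is to reduce the equivariant homotopy invariance statement to the (known) ordinary homotopy invariance for Bloch's higher Chow groups via the Reconstruction Theorem \ref{MainTheorem}. The essential geometric observation is that, since $G$ acts trivially on $\mathbb{A}^1$, for every cyclic subgroup $\sigma \subset G$ one has
\[
(X \times \mathbb{A}^1)^{\sigma} = X^{\sigma} \times \mathbb{A}^1,
\]
and the restriction of $p$ is the projection $p^{\sigma} : X^{\sigma} \times \mathbb{A}^1 \to X^{\sigma}$, which is $N(\sigma)$-equivariant.

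First, I apply Theorem \ref{MainTheorem} to both $X$ and $X \times \mathbb{A}^1$ to replace the two sides of the claimed isomorphism by
\[
\prod_{\sigma \in \Gamma} (CH_p(X^{\sigma}, r) \otimes \tilde{R}\sigma)^{N(\sigma)}
\quad\text{and}\quad
\prod_{\sigma \in \Gamma} (CH_{p+1}(X^{\sigma} \times \mathbb{A}^1, r) \otimes \tilde{R}\sigma)^{N(\sigma)},
\]
respectively. Next, I check that the diagram
\[
\begin{tikzcd}
\prod_{\sigma} (CH_p(X^{\sigma}, r) \otimes \tilde{R}\sigma)^{N(\sigma)} \arrow{r}{\pi(X)} \arrow{d}{\prod (p^{\sigma})^* \otimes \mathrm{id}}
& CH_p(G, X, r) \otimes \Lambda \arrow{d}{p^*} \\
\prod_{\sigma} (CH_{p+1}(X^{\sigma}\times \mathbb{A}^1, r) \otimes \tilde{R}\sigma)^{N(\sigma)} \arrow{r}{\pi(X \times \mathbb{A}^1)}
& CH_{p+1}(G, X \times \mathbb{A}^1, r) \otimes \Lambda
\end{tikzcd}
\]
commutes. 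This is done by unwinding the definition of $\pi$ as a composite of five maps and verifying compatibility with flat pull-back at each stage: the embedding $\tilde{R}\sigma \hookrightarrow R\sigma$ and the inclusion of $N(\sigma)$-invariants are formal; the isomorphism \eqref{TrivialAction} is natural in the $\sigma$-scheme; push-forward along the closed embedding $X^{\sigma} \hookrightarrow X$ is compatible with flat pull-back by the base-change property recalled in Section 2.2 (applied to the cartesian square with $X^{\sigma} \times \mathbb{A}^1 \to X^{\sigma}$ and $X \times \mathbb{A}^1 \to X$); and compatibility of the induction map with flat pull-back is Lemma \ref{Commutativity}(1).

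Finally, I invoke the classical homotopy invariance of Bloch's higher Chow groups, which says that each map $(p^{\sigma})^* : CH_p(X^{\sigma}, r) \to CH_{p+1}(X^{\sigma} \times \mathbb{A}^1, r)$ is an isomorphism. Since $-\otimes \tilde{R}\sigma$ is exact on $\Lambda$-modules and the functor $(-)^{N(\sigma)}$ is exact on $\Lambda N(\sigma)$-modules (both facts explicitly recorded in Section 3.1), the left vertical map in the diagram above is an isomorphism on each factor, hence so is the right vertical map $p^*$.

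The only point that requires any care is the base-change compatibility between push-forward along $X^{\sigma} \hookrightarrow X$ and flat pull-back along $p$; everything else is formal from the results already established in the paper, so I expect no substantive obstacle.
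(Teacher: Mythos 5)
Your proposal is correct and follows essentially the same route as the paper: identify $(X\times\mathbb{A}^1)^{\sigma}=X^{\sigma}\times\mathbb{A}^1$, sandwich $p_X^*$ in a commutative square with the isomorphisms $\pi(X)$ and $\pi(X\times\mathbb{A}^1)$ from Theorem \ref{MainTheorem}, and conclude from Bloch's homotopy invariance on each fixed-point factor. The only difference is that you spell out the commutativity of the square by unwinding the five constituents of $\pi$, which the paper leaves implicit.
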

\begin{proof}
Since $G$ acts trivially on $\mathbb{A}^1$, we have $(X \times \mathbb{A}^1)^{\sigma} = X^{\sigma} \times \mathbb{A}^1$. The diagram
\[
\begin{tikzcd}
\prod_{\sigma \in \Gamma} (CH_p (X^{\sigma}, r) \otimes \tilde{R}\sigma)^{N(\sigma)} \arrow{r}{\pi(X)} \arrow{d}{\prod p_{X^{\sigma}}^{*}}
& CH_p(G, X, r) \otimes \Lambda \arrow{d}{p_X^{*}}\\
\prod_{\sigma \in \Gamma} (CH_{p+1} (X^{\sigma}\times \mathbb{A}^1, r) \otimes \tilde{R}\sigma)^{N(\sigma)} \arrow{r}{\pi(X \times \mathbb{A}^1)} & CH_{p+1}(G, X \times \mathbb{A}^1, r) \otimes \Lambda
\end{tikzcd}
\]
is commutative. The maps $\pi(X)$ and $\pi(X \times \mathbb{A}^1)$ are isomorphisms by Theorem \ref{MainTheorem}. 
Each map $p_{X^{\sigma}}^{*}$ is an isomorphism by homotopy invariance for higher Chow groups \cite[Theorem (2.1)]{Bloch1}. 
Therefore, $p_X^{*}$ is an isomorphism.
\end{proof}

The reconstruction theorem for equivariant higher Chow groups yields a Riemann-Roch theorem for equivariant algebraic $K$-theory:
\begin{theorem} [Riemann-Roch]\label{BigConsequence}
If $X$ is a smooth $G$-scheme over $k$ then the spectral sequence \eqref{SpectralSequence} degenerates rationally, i.e., there are isomorphisms of $\mathbb{Q}$-vector spaces
\begin{equation} \label{Degeneration}
\bigoplus_{q}K_q(G,X) \otimes \mathbb{Q} \cong \bigoplus_{q}G_{q}(G,X) \otimes \mathbb{Q} \cong \bigoplus_{p,q} CH_{p}(G,X,q) \otimes \mathbb{Q}.
\end{equation}
\end{theorem}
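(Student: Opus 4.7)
The plan is to deduce the degeneration from the Reconstruction Theorem by comparing the equivariant motivic spectral sequence \eqref{SpectralSequence} to the product of ordinary motivic spectral sequences on the fixed loci $X^{\sigma}$, each of which is known to degenerate rationally in the smooth case. The first isomorphism in \eqref{Degeneration} is immediate from the smoothness identification $K_q(G,X) \cong G_q(G,X)$ recalled in the notation section, so the real content is the second one.

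First I would observe that for each cyclic $\sigma \in \Gamma$ the fixed point subscheme $X^{\sigma}$ is smooth over $k$: this is the standard consequence of $X$ being smooth and $\sigma$ being linearly reductive, since $|\sigma|$ divides $n$ and $n$ is coprime to the exponential characteristic. Consequently, for each such $\sigma$, the ordinary motivic spectral sequence \eqref{MotivicSpectraSequence} for $X^{\sigma}$ degenerates at $E_1$ after $\otimes \mathbb{Q}$, by the classical rational degeneration result for smooth schemes (Bloch, Friedlander--Suslin, Levine).

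Next, I would push this degeneration through the construction of the top tower in Section 4.1. Tensoring with $\tilde{R}\sigma$ (flat over $\Lambda$) and taking $N(\sigma)$-invariants are both exact functors on $\Lambda$-modules by the hypothesis that $n$ is invertible, so the spectral sequence \eqref{1SpectralSequence} assembled from the $X^{\sigma}$'s also degenerates at $E_1$ rationally. Now the commutative ladder of towers in Section 4.1 produces a map of spectral sequences from \eqref{1SpectralSequence} to \eqref{2SpectralSequence} whose effect on the $E_1$-page is exactly $\pi(X)$ (indexed over $p,q$); this is an isomorphism rationally by the Reconstruction Theorem \ref{MainTheorem}. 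A morphism of spectral sequences that is an isomorphism on $E_1$ is an isomorphism on every page, so the vanishing of higher differentials transfers to \eqref{2SpectralSequence}. This yields the second isomorphism in \eqref{Degeneration} and the degeneration of \eqref{SpectralSequence}.

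The main obstacle I expect is verifying cleanly that the family $\alpha_{(p)}(X)$ does induce a morphism of spectral sequences on the $E_1$-level that coincides with $\pi(X)$ from \eqref{Morphism}. This amounts to checking that the push-forward along $X^{\sigma} \hookrightarrow X$ and the induction map \eqref{InducedMap}, which enter the definition of $\pi(X)$ at the cycle level, are compatible with the filtration by dimension of support used to build $G_{(p)}(G,X,-)$; this follows from functoriality of $G_{(p)}(G,X,-)$ for closed immersions and the spectrum-level construction of induction recalled in Section 2.3, but it is the only point that requires more than symbol-pushing. Once this compatibility is in hand, the theorem follows by combining the rational degeneration on the top with the isomorphism on $E_1$, and then invoking $K_q(G,X) \cong G_q(G,X)$ for smooth $X$.
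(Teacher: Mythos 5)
Your proposal is correct and follows essentially the same route as the paper: reduce to the smooth fixed loci $X^{\sigma}$ (smoothness of $X^{\sigma}$ being the key geometric input, cited in the paper from Edixhoven), invoke the classical rational degeneration of \eqref{MotivicSpectraSequence} there, transport the conclusion through the exact functors $-\otimes\tilde{R}\sigma$ and $(-)^{N(\sigma)}$, and conclude via the comparison of \eqref{1SpectralSequence} with \eqref{2SpectralSequence} furnished by Theorem \ref{MainTheorem} and Corollary \ref{Vistoli}, together with $K_q(G,X)\cong G_q(G,X)$ for smooth $X$. Your explicit ``isomorphism on $E_1$ propagates to all pages, hence differentials vanish'' step is a slightly more careful packaging of what the paper does by directly identifying both the $E_1$-terms and the abutments, but the ingredients and structure are the same.
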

\begin{proof}
Since $X$ is smooth and $G$ is finite, every $G$-equivariant coherent sheaf on $X$ has a resolution by $G$-equivariant locally free sheaves (\cite[Corollary 5.8]{RT1}). 
Quillen's \textit{Dévissage theorem} provides for any $q \in \mathbb{N}$ an isomorphism
$$
K_q(G,X) \cong G_q(G,X).
$$
For any $\sigma \in \Gamma$, the scheme $X^{\sigma}$ is smooth (cf. \cite[Proposition 3.4]{Edixhoven}). Hence, the spectral sequence 
\begin{equation*}
E^{p,q}_1 = CH_{-p}(X^{\sigma},-p-q) \Rightarrow G_{-p-q}(X^{\sigma})
\end{equation*} degenerates rationally to get
$$
\bigoplus_{p,q} CH_{p}(X^{\sigma},q) \otimes \mathbb{Q} \cong \bigoplus_{q}G_{q}(X^{\sigma}) \otimes \mathbb{Q}
$$
(cf. \cite[Theorem 14.8]{ML1}). Taking product over all $\sigma \in \Gamma$ yields
$$
\prod_{\sigma \in \Gamma} (\bigoplus_{p,q} (CH_{p}(X^{\sigma},q) \otimes \mathbb{Q}) \otimes_{\Lambda} \tilde{R} \sigma)^{N(\sigma)} \cong \prod_{\sigma \in \Gamma} (\bigoplus_{q} (G_{q}(X^{\sigma}) \otimes \mathbb{Q}) \otimes_{\Lambda} \tilde{R}\sigma)^{N(\sigma)}.
$$
Equivalently,
$$
\bigoplus_{p,q} (\prod_{\sigma \in \Gamma} CH_{p}(X^{\sigma},q)) \otimes \tilde{R} \sigma)^{N(\sigma)}\otimes_{\Lambda} \mathbb{Q} \cong \bigoplus_{q}( \prod_{\sigma \in \Gamma} G_{q}(X^{\sigma}) ) \otimes \tilde{R}\sigma)^{N(\sigma)}\otimes_{\Lambda} \mathbb{Q},
$$
i.e.,
$$
\bigoplus_{p,q} CH_{p}(G,X,q) \otimes_{\mathbb{Z}} \mathbb{Q} \cong \bigoplus_{q}G_{q}(G,X) \otimes_{\mathbb{Z}} \mathbb{Q}.
$$

\end{proof}
\begin{remark}
When $X \in \mathbf{Sch}^G_k$ which is not neccessarily smooth, there is still an isomorphism
\begin{equation*}
\bigoplus_{q}G_{q}(G,X) \otimes \mathbb{Q} \cong  \bigoplus_{p,q} CH_{p}(G,X,q) \otimes \mathbb{Q}
\end{equation*}
by Theorem \ref{MainTheorem}, Corollary \ref{Vistoli} and Bloch-Riemann-Roch isomorphism
$$
G_q(X^{\sigma}) \otimes \mathbb{Q} \xrightarrow{\sim} \bigoplus_{p} CH_{p}(X^{\sigma},q) \otimes \mathbb{Q}
$$
\cite[Theorem (9.1)]{Bloch1}. However, we do not know how to define directly an \textit{equivariant Riemann-Roch map}
$$
G_{q}(G,X) \rightarrow  \bigoplus_{p} CH_{p}(G,X,q) \otimes \mathbb{Q}.
$$
\end{remark}
\subsection{Multiplicative structure and further remarks} When $X$ is smooth, there are certainly two ways to equip a multiplicative structure on equivariant higher Chow groups with rational coefficients (the integral case is still unknown).

The first structure is obtained through the isomorphism \eqref{Degeneration} where the multiplication on $CH_{*}(G,X,*) \otimes \mathbb{Q}$ inherits the ordinary multiplication on $K_{*}(G,X)\otimes \mathbb{Q}$ (given by tensor product over $\mathcal{O}_X$). 

The second structure is obtained by the reconstruction theorem for equivariant higher Chow groups. Recall that Jarvis-Kaufmann-Kimura have considered in \cite{JKK1} the \textit{stringy Chow ring}
$$
\prod_{g \in G} CH_{*} (X^{g}) \otimes \mathbb{Q}
$$
with the multiplicative structure given by the \textit{stringy product} \cite[Definition 1.6]{JKK1}. This can be extended to define a multiplicative structure on 
$$
\prod_{g \in G} CH_{*}(X^{g},*) \otimes \mathbb{Q},
$$
and hence on the algebra of invariants
$$
(\prod_{g \in G} CH_{*}(X^{g},*) \otimes \mathbb{Q})^G.
$$
By Theorem  \ref{MainTheorem}, this gives another multiplicative structure on $CH_{*}(G,X,*)\otimes \mathbb{Q}$.

These two structures are different in general even in the case of Chow groups $CH_{*}(G,X,0)\otimes \mathbb{Q}$. 
The first one is the usual multiplication on the Grothendieck group of the quotient stack $[X/G]$. 
The second one should be the usual multiplication on the Grothendieck group of a (and all) hyper-Kähler resolution of the coarse moduli space $X/G$ of $[X/G]$. 
This is the content of the \textit{K-theoretic hyper-Kähler resolution conjectures} \cite[Conjecture 1.2]{JKK1} which has been verified in certain interesting cases.
\begin{remark}
As the reader might guess, the isomorphism $\pi(X)$ in Theorem \ref{MainTheorem} need not to be true in general if we index equivariant higher Chow groups by codimension rather than dimension as in \cite{LS1}. 
Some degree shifts are needed to get right indexes. The reason is that $\pi(X)$ is defined by using push-forward for algebraic cycles and $K$-theory that do not preserve codimension in general.  
For instance, let $G = \mathbb{Z}/2$ act on $X = \mathbb{A}^1: = \mathrm{Spec}(k[t])$ by sending $t \to -t$. A simple calculation gives 
$$
CH^1(\mathbb{Z}/2, \mathbb{A}^1, 0) = \mathbb{Z}.
$$
(cf. \cite[Example 6.17]{LS1}).
The group $\mathbb{Z}/2$ has only two (cyclic) subgroups $0$ and $\mathbb{Z}/2$. We have
$$
CH^1((\mathbb{A}^1)^0, 0) = CH^1(\mathbb{A}^1, 0) = CH^1(\mathrm{Spec}k, 0) = 0
$$
and 
$$
CH^1((\mathbb{A}^1)^{\mathbb{Z}/2}, 0) = CH^1(\mathrm{Spec}k, 0) = 0
$$
by dimension reason. Therefore, 
$$
\prod_{\sigma}(CH^1((\mathbb{A}^1)^{\sigma}, 0) \otimes \tilde{R}\sigma)^{N(\sigma)} = 0.
$$
\end{remark}

It is aslo worth mentioning that the two multiplicative structures on rational equivariant higher Chow groups considered above do not respect the grading by codimension. 
For the first multiplicative structure, the reason is that the ring structure on $K_0(G,X)$ does not in general respect the topological filtration \cite[Remark 3.6]{LS1}.
For the second multiplicative structure, this failure is measured by 'age' (or 'degree shifting number') \cite[Definition 1.3]{JKK1}.
\begin{remark}
In the non-equivariant case for $X$ smooth, the spectral sequence $\eqref{MotivicSpectraSequence}$ admits actions of \text{Adams operations} which implies its degeneration with 
rational coefficients. It would be interesting to see how to equip Adam operations on the equivariant motivic spectral sequence \eqref{EquivariantSpectralSequence}. 
It might be possible to follow the construction given in \cite[Theorem 9.7]{ML1}, but there are some technical annoyances we have to overcome. 
Even if we are in a good situation, there is no reason to expect that $\pi(X)$ preserves these operations because it is defined using push-forwards which are not ring homomorphisms in general.

\end{remark}
\bibliographystyle{amsplain}

\end{document}